\newtheorem{definition}{Definition}[section]
\newtheorem{lemma}[definition]{Lemma}
\newtheorem{proposition}[definition]{Proposition}
\newtheorem{corollary}[definition]{Corollary}
\newtheorem{theorem}[definition]{Theorem}
\newtheorem{conjecture}[definition]{Conjecture}
\newtheorem*{notation}{Notation}
\theoremstyle{remark}
\declaretheorem[name=Remark,sibling=theorem,qed={\lower-0.3ex\hbox{$\diamond$}}]{remark}
\declaretheorem[name=Note,sibling=theorem,qed={\lower-0.3ex\hbox{$\diamond$}}]{note}
\title[Algebraicity of $L$-values]
{Algebraicity of $L$-values for $\GSp_4 \times \GL_2$ and $\GSp_4 \times \GL_2 \times \GL_2$}
\newcommand{\fa}{\mathfrak{a}}
\newcommand{\fb}{\mathfrak{b}}
\newcommand{\fz}{\mathfrak{z}}
\DeclareMathOperator{\FL}{FL}
\DeclareMathOperator{\Iw}{Iw}
\newcommand{\ZZ}{\mathbf{Z}}
\newcommand{\QQ}{\mathbf{Q}}
\newcommand{\RR}{\mathbf{R}}
\newcommand{\CC}{\mathbf{C}}
\newcommand{\GG}{\mathbf{G}}
\renewcommand{\AA}{\mathbf{A}}
\newcommand{\Qp}{\QQ_p}
\newcommand{\Zp}{\ZZ_p}
\newcommand{\cW}{\mathcal{W}}
\newcommand{\cV}{\mathcal{V}}
\renewcommand{\ge}{\geqslant}
\renewcommand{\le}{\leqslant}
\renewcommand{\leq}{\leqslant}
\renewcommand{\geq}{\geqslant}
\DeclareMathOperator{\dep}{dep}
\DeclareMathOperator{\diag}{diag}
\DeclareMathOperator{\SO}{SO}
\DeclareMathOperator{\fin}{f}
\DeclareMathOperator{\GL}{GL}
\DeclareMathOperator{\GSp}{GSp}
\DeclareMathOperator{\Hom}{Hom}
\DeclareMathOperator{\Kl}{Kl}
\DeclareMathOperator{\Si}{Si}
\DeclareMathOperator{\SL}{SL}
\DeclareMathOperator{\Sp}{Sp}
\DeclareMathOperator{\sph}{sph}
\DeclareMathOperator{\tor}{tor}
\newenvironment{smatrix}{\left(\begin{smallmatrix}}{\end{smallmatrix}\right)}
\newcommand{\tbt}[4]{
	\ensuremath{
		\begin{pmatrix}
			#1 & #2 \\
			#3 & #4
		\end{pmatrix}
	}
}
\newcommand{\stbt}[4]{
	\ensuremath{
		\begin{smatrix}
			#1 & #2 \\
			#3 & #4
		\end{smatrix}
	}
}
\begin{document}

\begin{abstract}
 We prove algebraicity results for critical $L$-values attached to the group $\GSp_4 \times \GL_2$, and for Gan--Gross--Prasad periods which are conjecturally related to central $L$-values for $\GSp_4 \times \GL_2 \times \GL_2$. Our result for $\GSp_4 \times \GL_2$ gives a new proof (by a very different method) of a recent result of Morimoto, and will be used in a sequel paper to construct a new $p$-adic $L$-function for $\GSp_4 \times \GL_2$. The results for Gross--Prasad periods appear to be new.  A key aspect is the computation of certain archimedean zeta integrals, whose $p$-adic counterparts are also studied in this note.
\end{abstract}

\author{David Loeffler}
\address{D.L.: Mathematics Institute, University of Warwick, Coventry CV4 7AL, United Kingdom}
\email{D.A.Loeffler@warwick.ac.uk}

\author{\'Oscar Rivero}
\address{O.R.: Mathematics Institute, University of Warwick, Coventry CV4 7AL, United Kingdom}
\email{Oscar.Rivero-Salgado@warwick.ac.uk}

\thanks{Supported by ERC Consolidator grant ``Shimura varieties and the BSD conjecture'' (D.L.) and Royal Society Newton International Fellowship NIF\textbackslash R1\textbackslash 202208 (O.R.). This material is based upon work supported by the National Science Foundation under Grant No. DMS-1928930 while the authors were in residence at the Mathematical Sciences Research Institute in Berkeley, California, during the Spring 2023 semester.}

\subjclass[2010]{11F46; 11F70.}

\keywords{algebraicity of $L$-values, Siegel Shimura varieties, zeta integrals}
 \maketitle
 \setcounter{tocdepth}{1}
 \tableofcontents

\section{Introduction}

 \subsection{Critical $L$-values for $\GSp_4 \times \GL_2$}
  \label{sect:introEis}

  Let $\pi \times \sigma$ be an automorphic representation of $\GSp_4 \times \GL_2$. We can attach to $\pi$ and $\sigma$ a degree 8 $L$-function $L(\pi \times \sigma, s)$, associated to the tensor product of the natural degree 4 (spin) and degree 2 (standard) representations of the $L$-groups of $\GSp_4$ and $\GL_2$.

  If $\pi$ and $\sigma$ are algebraic, then this $L$-function is expected to correspond to a motive, and in particular we can ask whether it has critical values. More precisely, suppose that (the $L$-packet of) $\pi$ corresponds to a holomorphic Siegel modular form of weight $(k_1, k_2)$, with $k_1 \ge k_2 \ge 2$; and that $\sigma$ corresponds to a modular form of weight $\ell \ge 1$. Then we expect there to exist motives $M(\pi)$ (of motivic weight $k_1 + k_2 - 3$) and $M(\sigma)$ (of motivic weight $\ell -  1$) such that
  \[
   L(\pi \times \sigma, s) = L\left(M(\pi) \otimes M(\sigma), s + \tfrac{w}{2}\right), \qquad w = k_1 + k_2 + \ell - 4.
  \]

  For $L(\pi \times \sigma, s)$ to be a critical value, we must have $s = -\tfrac{w}{2} \bmod \ZZ$, and $s$ must satisfy various inequalities depending on how the weights of $\pi$ and $\sigma$ interlace. For fixed $(k_1, k_2)$, the allowed pairs $(s, \ell)$ form three disjoint polygonal regions in the plane (all symmetric about the line $s = \tfrac{1}{2}$); for compatibility with the conjectures of \cite{LZvista} on Gross--Prasad periods (see below), we denote these by $(A)$, $(D)$ and $(F)$. The inequalities defining these regions are given in \cref{table1}.

  The first main goal of this paper is to prove an algebraicity result for weights in region $(D)$. As we shall shortly recall, many cases of this algebraicity are already known by work of Morimoto, B\"ocherer--Heim and Saha. However, our method is independent of these works and uses rather different methods; it gives some cases of algebraicity which are not already known, and (perhaps more importantly) will serve as the starting-point for results on interpolation of region $(D)$ $L$-values in $p$-adic families, which will be treated in a sequel paper.

  \begin{table}[ht]
   \caption{\label{table1}Critical regions for $\GSp_4 \times \GL_2$}
   \begin{tabular}{c r@{\hskip 0.3em}c@{\hskip 0.3em}l @{\hskip 2em} l}
    Region & \multicolumn{3}{c}{Range of $\ell$} & Range of $s$ \\[0.25ex]
    \hline
    \rule{0pt}{2.5ex}$(A)$ & $k_1 + k_2 - 1$ & $\le \ell \phantom{\le}$ & &
    $|2s-1| \le \ell - (k_1 + k_2 - 1)$\\[0.25ex]
    $(D)$ & $k_1 - k_2 + 3$ &$\le \ell \le$& $k_1 + k_2 - 3$ &
    $|2s-1| \le \min\left(k_1 + k_2 - 3 - \ell, \ell - (k_1 - k_2 + 3)\right)$\\[0.25ex]
    $(F)$ & $1$ & $\le \ell \le$ & $k_1 - k_2 + 1$ & $|2s-1| \le k_1 - k_2 + 1 - \ell$
   \end{tabular}
   \medskip

   (In the ``missing'' cases $\ell = k_1 \pm (k_2 - 2)$, the $L$-function has no critical values.)
  \end{table}

 \subsection{Gross--Prasad periods for $\GSp_4 \times \GL_2 \times \GL_2$}
  \label{sect:introcusp}

  The second problem we consider here is to study the \emph{Gross--Prasad period} attached to a cuspidal automorphic representation $\pi$ of $\GSp_4$ and two cuspidal automorphic representations $\sigma_1, \sigma_2$ of $\GL_2$, which is defined by
  \[ \mathcal{P}(\varphi, \psi_1, \psi_2) = \int_{[H]} \varphi(h) \psi_1(h_1) \psi_2(h_2) \mathrm{d}h, \]
  for forms $\varphi \in \pi$ and $\psi_i \in \sigma_i$. Here $H$ denotes the group $\GL_2 \times_{\GL_1} \GL_2$, and $[H]$ the adelic symmetric space $\RR^\times H(\QQ) \backslash H(\AA)$. (We may assume $\chi_{\pi} \chi_{\sigma_1} \chi_{\sigma_2} = 1$, since the integral is trivially zero otherwise.) The Gross--Prasad period vanishes unless the central characters satisfy the condition $\varepsilon(\pi_v \times \sigma_{1, v} \times \sigma_{2, v}) = +1$ for all places $v$. If this condition holds, the Gross--Prasad conjecture for $\SO_4 \times \SO_5$ predicts that $\mathcal{P}(-)$ is non-zero if and only if $L(\pi \times \sigma_1 \times \sigma_2, \tfrac{1}{2}) \ne 0$, and Ichino and Ikeda \cite{ichinoikeda10} have formulated a precise conjecture relating $|\mathcal{P}(\varphi, \psi_1, \psi_2)|^2$ to the central $L$-value. (More precisely, the original works of Gross--Prasad and Ichino--Ikeda apply when $\chi_\pi = 1$, so $\pi$ factors through $\SO_5$ and $\sigma = \sigma_1 \boxtimes \sigma_2$ through its subgroup $\SO_4$; the more general formulation we use here is due to Emory \cite{emory19}.) We shall prove here results for the periods $\mathcal{P}(-)$, which are not logically dependent on the conjectured relation to $L$-values, but nonetheless the Gross--Prasad and Ichino--Ikeda conjectures are the key motivation for studying these periods.

  If we take $\pi$ to have weight $(k_1, k_2)$, as above, and $\sigma_1, \sigma_2$ to correspond to holomorphic cuspforms of weights $c_1, c_2$, then we have 9 different cases $\{a, a', b, b', c, d, d', e, f\}$ depending on the inequalities satisfied by $(c_1, c_2)$ and $(k_1, k_2)$. See Figure 2 of \cite{LZvista} for a diagram illustrating these. In cases $\{b,b', e\}$ the Gross--Prasad period is automatically zero. In the remaining six cases one expects a rationality result for the Gross--Prasad period, whose formulation will depend on which case we consider; this is implicit in Conjecture 4.1.2 of \cite{LZvista}.

  The relation between these periods and the $\GSp_4 \times \GL_2$ problem above is via Novodvorsky's integral formula for the $\GSp_4 \times \GL_2$ $L$-function \cite{novodvorsky79}. Novodvorsky's integral can be seen as a ``degenerate case'' of the Gross--Prasad period, in which the cuspidal $\GL_2$-representation on the first factor of $H$ is replaced by a space of Eisenstein series. This relation allows the two rationality problems to be treated in parallel. If $s$ is a critical value for the $\GSp_4 \times \GL_2$ $L$-function, then the Eisenstein automorphic representation playing the role of $\sigma_1$ has weight $1 + |2s - 1|$; and the cases $(A)$, $(D)$, $(F)$ of the previous section correspond to requiring that the pair $(c_1, c_2) = (1 + |2s - 1|, \ell)$ should satisfy the inequalities $(a), (d), (f)$ respectively.

 \subsection{Known results for $\GSp_4 \times \GL_2$}

  Let $\pi$, $\sigma$ be as in \cref{sect:introEis}. We assume that $\ell \ne k_1 \pm (k_2 - 2)$, so that critical values exist and we are in one of the cases $(A)$, $(D)$, $(F)$. We may suppose without loss of generality that $\pi$ is \emph{tempered}, since Arthur's classification of the discrete spectrum \cite{arthur04, geetaibi18} shows that for non-tempered $\pi$, the $L$-function can be expressed as a product of automorphic $L$-functions for $\GL_2$ and $\GL_2 \times \GL_2$, and the algebraicity properties of these $L$-values are well-understood.

  Let $m$ denote the sum of the four smallest Hodge numbers of $M(\pi) \otimes M(\sigma)$, so that
  \[ m =
   \begin{cases}
    2k_1 + 2k_2 - 6 & \text{case $(A)$},\\
    k_1 + k_2 + \ell - 4 & \text{case $(D)$},\\
    2k_2 + 2\ell - 6 & \text{case $(F)$}.
   \end{cases}
  \]

  \begin{conjecture}\label{conj:algebraicity}
   \begin{enumerate}[(1)]
    \item There exists a period $\Omega(\pi \times \sigma) \in \CC^\times$ with the following property: for every $j \in \ZZ$ such that $s = -\tfrac{w}{2} + j$ is a critical value, we have
    \[
     \frac{L(\pi \times \sigma, -\tfrac{w}{2} + j)}{(-2\pi i)^{4j-m} \Omega(\pi \times \sigma)}
     \in \overline{\QQ}.
    \]

    \item For $\chi$ a Dirichlet character we have $\Omega(\pi \times \sigma \times \chi) = \Omega(\pi \times \sigma)$ mod $\overline{\QQ}^\times$.

   \end{enumerate}
  \end{conjecture}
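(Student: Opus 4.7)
The plan is to establish (at least the region-$(D)$ case of) this conjecture by combining Novodvorsky's Rankin--Selberg integral representation of $L(\pi\times\sigma,s)$ with a rationality theorem for pairings in coherent cohomology of Siegel Shimura varieties. Novodvorsky's integral expresses the completed $L$-function as an adelic period of a cusp form $\varphi\in\pi$ against an Eisenstein series $E(\psi,s)$ induced from a form $\psi\in\sigma$, integrated over the $H(\AA)$-orbit with $H = \GL_2\times_{\GL_1}\GL_2 \hookrightarrow \GSp_4$. Unfolding this integral expresses it as a product of local zeta integrals whose unramified factors recover the Euler factors of $L(\pi\times\sigma,s)$, and whose ramified factors are algebraic in the Satake data for suitable $\Qb$-rational test vectors.

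The second step is to interpret the resulting global period cohomologically. One takes $\varphi_\infty$ to lie in the minimal $K_\infty$-type of the holomorphic discrete series of $\GSp_4(\RR)$ of weight $(k_1,k_2)$, and $E_\infty$ to be a nearly-holomorphic Eisenstein section on $H(\RR)$ of weight $|2s-1|+1$. Then the Novodvorsky period becomes the cup product, in coherent cohomology of the Siegel threefold $S_K$, of a class representing $\pi$ in $H^{q}(S_K,\omega^{(k_1,k_2)})$ against the pushforward along the $H$-sub-Shimura variety of a class representing $\sigma$. Both sides carry canonical $\Qb$-rational structures coming from algebraic de Rham realizations of automorphic vector bundles, so the pairing is algebraic modulo a period of the form $\Omega(\pi)\Omega(\sigma)$ attached to fixed $\Qb$-rational test vectors.

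The hard part will be the computation at the archimedean place, namely checking that the archimedean zeta integral at $s = -\tfrac{w}{2}+j$ equals a nonzero \emph{algebraic} multiple of $(-2\pi i)^{4j-m}$. In region $(D)$ this is genuinely subtle: the embedding $H(\RR)\hookrightarrow \GSp_4(\RR)$ is not holomorphic for the natural complex structures, so the computation amounts to a branching problem from the holomorphic discrete series of $\GSp_4(\RR)$ to $H(\RR)$ twisted by finite-dimensional algebraic coefficients. My strategy would be to select $E_\infty$ inside the unique $K_\infty$-isotypic piece contributing to the critical value at a given $j$, and then reduce the integral to an explicit one over $H(\RR)/K_\infty^H$ that can be evaluated using Shintani functions or direct manipulation of the Iwasawa decomposition, pinning down the constant via a weight-by-weight analysis of the relevant branching map. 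Once this is done with a rational constant, part~(1) follows with $\Omega(\pi\times\sigma) := \Omega(\pi)\Omega(\sigma)$, and part~(2) is then essentially formal: twisting $\sigma$ by a Dirichlet character $\chi$ only alters the finite part of $\psi$, leaving the archimedean datum and hence the period unchanged modulo $\Qb^\times$.
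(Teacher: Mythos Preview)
Your overall strategy---Novodvorsky's integral, interpreted as a coherent-cohomology pairing, together with an archimedean zeta-integral computation---is exactly the route the paper takes. However, there is a genuine gap in one crucial choice.

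You propose to take $\varphi_\infty$ in the minimal $K_\infty$-type of the \emph{holomorphic} discrete series of $\GSp_4(\RR)$. This does not work with Novodvorsky's integral: the Eulerian factorisation (and hence the identification of the integral with an $L$-value times local factors) depends on $\pi$ having a Whittaker model, and the holomorphic discrete series is not generic. The paper instead works with the unique \emph{globally generic} representation in the same $L$-packet as $\pi$, whose archimedean component is the generic discrete series $\pi_\infty^W$. This representation contributes to coherent cohomology in degree $1$ (and $2$), not $0$ or $3$, and the relevant period $\Omega^{(1)}(\pi)$ is a Whittaker period for $H^1$, defined by comparing the $\Qb$-structure on $H^1(X_G,\cV_{\kappa_2}(-D))[\pi_{\fin}]$ with the $\Qb$-structure on the Whittaker model via Moriyama's explicit basis of the minimal $K_\infty$-type. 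This is precisely the point the paper emphasises as distinguishing its method from Morimoto, B\"ocherer--Heim, and Saha, all of whom use holomorphic Siegel forms but with a different integral representation (Eisenstein series restricted from $U(3,3)$, not Novodvorsky).

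Two smaller points. First, the archimedean integral is not handled by Shintani functions or a direct branching computation; the paper instead invokes Moriyama's explicit formula for the Bessel function of $\pi_\infty^W$ along the torus, which collapses the integral to a product of $\Gamma$-factors that one checks equals a constant times $L(\Pi_\infty\times\Sigma_\infty,s)$. Second, the argument as written only covers the subregion $(D^-)$ (that is, $\ell\le k_1$), because outside this range the required $K_\infty^H$-type no longer appears in the minimal $K_\infty$-type $\tau_2$ of $\pi_\infty^W$; the full region $(D)$ remains open by this method.
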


  Of course, one would also like to have a tolerably explicit form for the period $\Omega(\pi \times \sigma)$. For cases (A), (D), Morimoto \cite[Proposition 2.1]{morimotoAdvances} has shown that Deligne's general algebraicity conjectures \cite{deligne79} imply an explicit form for the periods $\Omega(\pi \times \sigma)$ in terms of Petersson norms of holomorphic eigenforms.

  \subsubsection{Known results: case $(F)$}

   For case $F$, the conjecture is known in full: it is proved in \cite{LPSZ1}, building on Harris' study of ``occult periods'' for the degree 4 $L$-function of $\GSp_4$ in \cite{harris04}. In this case, the period $\Omega(\pi \times \sigma)$ depends only on $\pi$ (not on $\sigma$), and is defined using the $\overline{\QQ}$-structure on $H^2$ of coherent automorphic sheaves on a toroidal compactification of the Siegel modular threefold.

  \subsubsection{Known results: case $(D)$}

   There are a number of papers in the literature establishing Conjecture \ref{conj:algebraicity} for weights in region (D) under various additional hypotheses on the weights and/or levels.
   \begin{itemize}

    \item B\"ocherer--Heim \cite{BH06} consider the case $k_1 = k_2 = 2k_G$ and $\ell = 2k_h$ for some integers $k_G, k_h$, and $\pi$ and $\sigma$ are unramified at all finite primes, generated by holomorphic eigenforms $G$ for $\operatorname{Sp}_4(\ZZ)$ and $h$ for $\operatorname{SL}_2(\ZZ)$ respectively. For such $\pi$, $\sigma$, they prove part (1) of \cref{conj:algebraicity} for the explicit period
    \[ \Omega(\pi \times \sigma)^{\mathrm{BH}} \coloneqq \langle G, G \rangle \cdot \langle h, h \rangle,\]
    where $\langle -, -\rangle$ denotes the Petersson scalar product (and $G$ and $h$ are normalised to have their Fourier coefficients in $\overline{\QQ}$). Their result assumes that the first Fourier--Jacobi coefficient of $G$ is non-vanishing.

    \item Saha \cite{Saha} proves an analogous result with slightly stronger assumptions on the weight (he assumes that $k_1 = k_2 = \ell \ge 6$), but less restrictive conditions on the level (allowing $\pi$ and $\sigma$ to be either the Steinberg representation, or its unramified quadratic twist, at some finite places). This refines earlier works of Furusawa, Pitale--Schmidt, and Saha himself. As in B\"ocherer--Heim, Saha assumes a non-vanishing hypothesis for a certain Fourier coefficient (depending on the levels of $\pi$ and $\sigma$); and the period he uses is again $\langle G, G \rangle \cdot \langle h, h \rangle$.

    \item Morimoto \cite{morimotoAdvances} considers the more general case where $k_1$ need not be equal to $k_2$ (so $\pi$ is generated by vector-valued, rather than scalar-valued, holomorphic Siegel modular forms), and makes no assumption on the levels at all. However, his result excludes certain critical values close to the centre of the functional equation.
   \end{itemize}

  \subsubsection{Case $(A)$}

   For case $(A)$, the relevant period $\Omega(\pi \times \sigma)$ should be independent of $\pi$ and given by $\langle g, g \rangle^2$, where $g$ is the normalised newform generating $\sigma$ as before. This has been proved, under some auxiliary hypotheses on the weights and levels, by Furusawa--Morimoto \cite{furusawamorimoto}.

 \subsection{Our results}

  In this paper, we focus on case $(D)$, or more precisely on the following sub-case:

  \begin{definition}
   Say we are in case $(D^-)$ if $(k_1, k_2, \ell, s)$ satisfy the inequalities of case $(D)$ together with the additional condition $\ell \le k_1$.
  \end{definition}

  For weights in this region we prove the following:

  \begin{theorem}\label{thm:main}
   Assume the weights of $(\pi, \sigma)$ are in case $(D^-)$, and the following local hypothesis holds:
   \begin{itemize}
   \item For each prime $\ell$ (if any) such that $\sigma_\ell$ is supercuspidal, the central character of $\pi_\ell$ is a square in the group of characters of $\QQ_\ell^\times$.
   \end{itemize}
   Then \cref{conj:algebraicity} holds for $\pi \times \sigma$, with a period of the form
   $\Omega(\pi \times \sigma) = \Omega^{(1)}(\pi) \cdot \langle h, h \rangle$, where $\langle h, h \rangle$ denotes the Petersson norm of the normalised newform generating $\sigma$, and $ \Omega^{(1)}(\pi)$ is a period depending only on $\pi$ (defined using $H^1$ of coherent sheaves on the compactified Siegel threefold).
  \end{theorem}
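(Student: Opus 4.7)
The plan is to express the critical $L$-value, via Novodvorsky's integral representation, as a global period of the form
\[
 \int_{[H]} \varphi(h)\, E(h_1, s)\, \psi(h_2)\, dh,
\]
with $\varphi \in \pi$, $\psi \in \sigma$ the normalised newform, and $E(-,s)$ a $\GL_2$ Eisenstein series of weight $c_1 = 1 + |2s-1|$. Up to an explicit product of local zeta integrals at ramified places and an archimedean factor, this global integral computes $L(\pi \times \sigma, s)$. In case $(D^-)$ the weight combinatorics $(k_1, k_2, c_1, \ell)$ allow us to take $\varphi$ to be represented by a nearly-holomorphic Siegel cohomology class and $E(s)$, $\psi$ to be honestly holomorphic on the two $\GL_2$-factors of $H$, paralleling the Gross--Prasad setup of \cref{sect:introcusp}.

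To extract algebraicity I would route the pairing through coherent cohomology on the Siegel threefold $S_{\GSp_4}$. The embedding $H \hookrightarrow \GSp_4$ induces a morphism $\iota\colon Y_H \to S_{\GSp_4}$ whose source is essentially a product of two modular curves glued along their common $\GL_1$ quotient. One realises the contribution of $\pi_\infty$ as a class in $H^1(S_{\GSp_4}, \omega^\kappa)$ for a coherent automorphic bundle $\omega^\kappa$ whose weight depends on $(k_1, k_2)$; this cohomology carries a canonical $\overline{\QQ}$-structure, and the transcendental factor relating its $\overline{\QQ}$-line to the automorphic normalisation of $\pi$ is the period $\Omega^{(1)}(\pi)$. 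Pulling back $\varphi$ via $\iota$, cupping with the Serre-duality class of $\psi$ on one modular curve (which contributes the factor $\langle h, h \rangle$), and evaluating against the holomorphic Eisenstein section $E(s)$ on the other exhibits the global period as a $\overline{\QQ}$-linear combination of coherent-cohomology classes on $Y_H$, and hence as an element of $\overline{\QQ} \cdot \Omega^{(1)}(\pi) \cdot \langle h, h \rangle$.

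The hard part will be the explicit computation of the archimedean zeta integral: one has to pair the archimedean vectors arising from the coherent-cohomological realisations of $\pi_\infty$ and $\sigma_\infty$ with the archimedean holomorphic Eisenstein section, and verify that the result is a nonzero algebraic multiple of $(-2\pi i)^{4j-m}$ times an explicit product of Gamma factors. This archimedean calculation is the technical core of the argument, and it is what pins down the restriction to case $(D^-)$, because the inequality $\ell \le k_1$ is precisely what makes the pairing land in the correct degree ($H^1$) of coherent cohomology on both sides. At the finite bad primes the local zeta integrals must be made nonvanishing by an appropriate test-vector choice: when $\sigma_\ell$ is supercuspidal one needs to take an Eisenstein section whose inducing datum is essentially a square root of $\chi_{\pi_\ell}^{-1}$, and such a square root exists precisely under the stated assumption that $\chi_{\pi_\ell}$ is a square character, giving the source of the local hypothesis. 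Finally, part (2) of \cref{conj:algebraicity} follows from the fact that neither $\Omega^{(1)}(\pi)$ nor $\langle h, h \rangle$ changes (modulo $\overline{\QQ}^\times$) under twisting $\sigma$ by a Dirichlet character.
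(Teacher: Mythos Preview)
Your outline matches the paper's approach closely: Novodvorsky's integral realised as a coherent cup-product pairing $H^1(X_G) \times H^1(X_H) \to \QQ$ via pullback along $\iota$, with the archimedean zeta integral (computed from Moriyama's explicit Whittaker functions for the generic discrete series of $\GSp_4(\RR)$) as the technical core, and the local hypothesis ensuring that the bad local integrals generate the unit ideal in $\CC[\ell^{\pm s}]$.

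One point of terminology to correct, since it could derail the details if you wrote them out: the ``nearly-holomorphic'' apparatus sits on the $\GL_2$ Eisenstein side, not on the Siegel side. In the paper the $\GSp_4$ vector $\varphi = F_\xi$ is a specific $K_\infty$-type vector in the \emph{generic} discrete series $\pi_\infty^W$, realised directly as a class in coherent $H^1$ via the Harris--Kudla comparison; no differential operators are applied to it. It is the Eisenstein series $E^{\Phi_1}(-;\chi,s)$ that is nearly-holomorphic when $t = \tfrac{c_1' - c_1}{2} > 0$, arising as $\delta^t$ of a genuinely holomorphic weight-$c_1$ Eisenstein series, and the rationality of this step is handled by enlarging the coherent sheaves $\cV^G_{\kappa_2}$ and $\cV^H_\lambda$ to filtered versions $\widetilde{\cV}$ compatible with the pairing. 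So your sentence ``$\varphi$ \dots\ a nearly-holomorphic Siegel cohomology class and $E(s), \psi$ \dots\ honestly holomorphic'' has the roles reversed.
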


  The local hypothesis is required in order to show that Novodvorsky's zeta integral computes the correct $L$-factors at the bad primes. It is, of course, vacuous if $\pi$ has trivial central character. We hope that future work will allow this assumption to be removed.

  Our methods are very different from the works cited above. Where those works use integral formulae involving holomorphic (or nearly-holomorphic) Siegel cusp forms, and Eisenstein series restricted from a much larger group (namely $U(3, 3$), we instead work with a non-holomorphic cusp form for $\GSp_4$ belonging to the unique \emph{globally generic} representation in the same $L$-packet as $\pi$, and an Eisenstein series pushed forward from $\GL_2 \times \GL_2$. In particular, our results are unconditional, not depending on any non-vanishing assumptions on Fourier coefficients; and they cover all critical values of the relevant $L$-function (even the near-central ones excluded in \cite{Saha} and \cite{morimotoAdvances}).

  \begin{remark}
   Comparing our results with those of Morimoto cited above, we can deduce a consequence for the arithmetic of $\pi$ alone: if $\pi$ has weight $(k_1, k_2)$ and $k_2$ is sufficiently large that Theorem 1 of \emph{op.cit.} applies for some $\ell$ (it suffices to take $k_2 > 6$), then the $H^1$ Whittaker period $\Omega^{(1)}(\pi)$ is a $\overline{\QQ}^\times$-multiple of $\langle G, G \rangle$, for an appropriately normalised Siegel eigenform $G$ generating $\pi$. This period relation is not at all obvious a priori.
  \end{remark}

  If $k_2 > 3$, then the set of $(s, \ell)$ satisfying $(D^-)$ is a strict subset of the $(s, \ell)$ satisfying $(D)$; and the methods used in this article do not apply for the remaining cases. It seems likely that these remaining cases will require a theory of Maass--Shimura-type differential operators acting on $H^1$ of Siegel threefolds; this is beyond the scope of the present work.

 \subsection{Connection with other works}

  In a sequel to this paper \cite{LR3}, we build on the algebraicity results developed here in order to define a $p$-adic $L$-function interpolating $L$-values along the ``edge'' of region $(D)$ (that is, with $s = \frac{k_1 + k_2 - 2 - \ell}{2}$).

  This computation requires the evaluation of a certain local zeta integral at the $p$-adic place (which gives the Euler factor $\mathcal E^{(D)}(\pi \times \sigma)$ appearing in the interpolation property of our $p$-adic $L$-function). We carry out this local computation here, rather than in the sequel paper, since its proof has much in common with the local Archimedean computation needed to prove \cref{thm:main} (and little in common with the $p$-adic interpolation computations which form the bulk of the sequel paper).

\subsection*{Acknowledgements} The authors would like to thank Sarah Zerbes for informative conversations related to this work, and Kazuki Morimoto for patiently explaining his results in \cite{morimotoAdvances} to us.

\section{The groups $G$ and $H$ and their Shimura varieties}

\subsection{Groups and parabolics}

We denote by $G$ the group scheme $\GSp_4$ (over $\ZZ$), defined with respect to the anti-diagonal matrix $J = \begin{smatrix}&&&1\\ &&1 \\ &-1 \\ -1 \end{smatrix}$; and we let $\nu$ be the multiplier map $G \to \GG_m$. We define $H = \GL_2 \times_{\GL_1} \GL_2$, which we embed into $G$ via the embedding
\[ \iota \, : \, \left[
\begin{pmatrix}a&b\\c&d\end{pmatrix}, \begin{pmatrix}a'&b'\\c'&d'\end{pmatrix} \right]
\mapsto \begin{pmatrix}a&&&b\\&a'&b'\\&c'&d'\\c&&&d\end{pmatrix}.
\]
We sometimes write $H_i$ for the $i$-th $\GL_2$ factor of $H$. We write $T$ for the diagonal torus of $G$, which is contained in $H$ and is a maximal torus in either $H$ or $G$.

We write $B_G$ for the upper-triangular Borel subgroup of $G$, and $P_{\Si}$ and $P_{\Kl}$ for the standard Siegel and Klingen parabolics containing $B$, so
\[ P_{\Si} =
\begin{smatrix}\star & \star & \star & \star \\ \star & \star & \star & \star\\
&& \star & \star\\ && \star & \star \end{smatrix}, \qquad
P_{\Kl} =
\begin{smatrix}\star & \star & \star & \star \\  & \star & \star & \star\\
&\star & \star & \star\\ &&& \star \end{smatrix}.
\]
We write $B_H = \iota^{-1}(B_G) = \iota^{-1}(P_{\Si})$ for the upper-triangular Borel of $H$.

In this paper $P_{\Si}$ will be much more important than $P_{\Kl}$ (in contrast to \cite{LPSZ1}). We have a Levi decomposition $P_{\Si} = M_{\Si} N_{\Si}$, with $M_{\Si} \cong \GL_2 \times \GL_1$, identified as a subgroup of $G$ via
\[
(A, u) \mapsto \begin{pmatrix}A\\&uA'\end{pmatrix},\qquad A' \coloneqq \begin{pmatrix}&1\\1\end{pmatrix} \cdot {}^t\!A^{-1} \cdot \begin{pmatrix}&1\\1\end{pmatrix}.
\]
The intersection $B_M \coloneqq M \cap B_G$ is the standard Borel of $M$; its Levi factor is $T$.

\subsection{Flag varieties and Bruhat cells}

We write $\FL_G$ for the Siegel flag variety $P \backslash G$, with its natural right $G$-action. There are four orbits for the Borel $B_G$ acting on $\FL_G$, the \emph{Bruhat cells}, represented by a subset of the Weyl group of $G$, the \emph{Kostant representatives}, which are the smallest-length representatives of the quotient $W_M \backslash W_G$. We denote these by $w_0, \dots, w_3$; see \cite{LZ21-erl} for explicit matrices. Note that the cell $C_{w_i} = P \backslash P w_i B_G \subset \FL_G$ has dimension $\ell(w_i) = i$.

Analogously, for the $H$-flag variety $\FL_H = B_H \backslash H$, we have 4 Kostant representatives $w_{00} = \mathrm{id}$, $w_{10} = \left(\begin{pmatrix}0&1\\-1&0\end{pmatrix}, \mathrm{id}\right)$, similarly $w_{01}$, $w_{11}$ (with the cell $C_{w_{ij}}$ having dimension $i + j$). (This is the whole of the Weyl group of $H$, since the Levi subgroup of $M_H = T$ is trivial.)

 \subsection{Representations}



  We retain the conventions about algebraic weights and roots of \cite{LZ21-erl}. In particular, we identify characters of $T$ with triples of integers $(r_1,r_2; c)$, with $r_1+r_2 = c$ modulo 2 corresponding to $\diag(st_1,st_2,st_2^{-1},st_1^{-1}) \mapsto t_1^{r_1} t_2^{r_2} s^c$. With our present choices of Borel subgroups, a weight $(r_1,r_2; c)$ is dominant for $H$ if $r_1,r_2 \geq 0$, dominant for $M_G$ if $r_1 \geq r_2$, and dominant for $G$ if both of these conditions hold. (We frequently omit the central character $c$ if it is not important in the context.)

 \subsection{Models of Shimura varieties}

  Let $K$ be a neat open compact subgroup of $\GSp_4(\AA_{\fin})$. Denote by $Y_{G,\QQ}$ the canonical model over $\QQ$ of the level $K$ Shimura variety. It is a smooth quasiprojective threefold, whose complex points are canonicallly identified with a double-coset space of $G(\AA)$, as discussed in \cite[\S2.3]{LPSZ1}. We write $Y_{H,\QQ}$ for the canonical $\QQ$-model of the Shimura variety for $H$ of level $K_H = K \cap H(\AA_{\fin})$, which is a moduli space for ordered pairs of elliptic curves with level structure. Further, there is a morphism of algebraic varieties $\iota \, : \, Y_{H,\QQ} \rightarrow Y_{G,\QQ}$.

  After choosing a suitable combinatorial datum (a rational polyhedral cone decomposition), we can define a smooth compactification $X_{G, \QQ}$ of $Y_{G, \QQ}$. This depends on the choice of cone decomposition, but we shall not indicate this in the notation, since the choice of cone decomposition will remain fixed throughout. As usual, we denote by $D$ the boundary divisor.

  The cone decomposition for $G$ naturally determines a cone decomposition for $H$ and hence a compactification $X_{H, \QQ}$ of $Y_{H, \QQ}$, and the embedding $\iota$ extends to a finite morphism $X_{H, \QQ} \to X_{G, \QQ}$ (which we also denote by $\iota$). One can in fact always choose the cone decomposition in such a way that this map of toroidal compactifications is a closed immersion \cite{lan19}, although we do not need this here.

  \begin{remark}
   If $K_H$ is the product of subgroups $K_{H, 1} \times K_{H, 2}$, then $Y_{H, \QQ}$ is a product of two modular curves. Each of these has a canonical compactification given by adjoining finitely many cusps; but $X_{H, \QQ}$ may not be the product of these compactified modular curves (depending on the choice of the toroidal boundary data). In general $X_{H, \QQ}$ will be obtained from the product of compactified modular curves by a finite sequence of blow-ups concentrated above points of the form $(\text{cusp}) \times (\text{cusp})$.
  \end{remark}

 \subsection{Coefficient sheaves}\label{subsec:coef}

  We adopt the conventions recalled in \cite[\S2.5.1]{LZ21-erl}. For our further use, recall that the Weyl group acts on the group of characters $X^*(T)$ via $(w \cdot \lambda)(t) = \lambda(w^{-1}tw)$. As discussed in loc.\,cit., we can define explicitly $w_G^{\max}$, the longest element of the Weyl group, as well as $\rho = (2, 1; 0)$, which is half the sum of the positive roots for $G$.

  There is a functor from representations of $P_G$ to vector bundles on $X_{G, \QQ}$; and we let $\cV_{\kappa}$, for $\kappa \in X^{\bullet}(T)$ that is $M_G$-dominant, be the image of the irreducible $M_G$-representation of highest weight $\kappa$. Given an integral weight $\nu \in X^\bullet(T)$ such that $\nu + \rho$ is dominant, we define
  \[ \kappa_i(\nu) = w_i(\nu+\rho) - \rho, \quad 0 \leq i \leq 3, \]
  where as usual $\rho$ is half the sum of the positive roots. These are the weights $\kappa$ such that representations of infinitesimal character $\nu^{\vee}+\rho$ contribute to $R\Gamma(S_K^{G,\tor},\cV_{\kappa})$; if $\nu$ is dominant (i.e. $r_1 \ge r_2 \ge 0$), they are the weights which appear in the \emph{dual BGG complex} computing de Rham cohomology with coefficients in the algebraic $G$-representation of highest weight $\nu$. See \cite[\S2.5.2]{LZ21-erl} for explicit formulae.





\subsection{Cohomology}
According to the previous discussion, if $V$ is an algebraic representation of $P_S$ over $\ZZ_{(p)}$, we have a vector bundle $\cV$ on $X_G$ defined by $\cV:= V \times^{P_S} \mathcal T_G$, where $\mathcal T_G$ is the canonical $P_S$-torsor over $X_G$.

The Zariski cohomology groups $H^i(X_G,\cV)$ and $H^i(X_G,\cV(-D))$ are independent, up to canonical isomorphism, of the choice of cone decomposition $\Sigma$ for the compactification, and have actions of prime-to-$p$ Hecke operators $[KgK]$, for $g \in G(\AA_{\fin}^p)$. The same is true for $H$ in place of $G$, and hence there are morphisms of sheaves
\begin{equation}\label{pullback}
H^i(X_G,\cV) \xrightarrow{\iota^*} H^i(X_H, \cV_{B_H})
\end{equation}
and also
\begin{equation}\label{pushforward}
H^i(X_H, \cV_{B_H} \otimes \alpha_{G/H}^{-1}) \xrightarrow{\iota_*} H^{i+1}(X_G,\cV)
\end{equation}
for $0 \leq i \leq 2$ and any $P_S$-representation $V$. Here, $\alpha_{G/H}$ denotes the character $(1, 1; 0)$ of $B_H$, the Borel subgroup of $H$. These maps will play a key role later for defining the appropriate pairings involved in the main constructions of the note.

Let $\pi$ (resp. $\sigma$) be a cuspidal automorphic representation of $G$ (resp. $\GL_2$), and consider the arithmetic normalisation of the finite part, $\pi_{\fin}' := \pi_{\fin} \otimes \| \cdot \|^{-(r_1+r_2)/2}$ (resp. $\sigma_{\fin}' := \sigma_{\fin} \otimes \| \cdot \|^{-c_1/2}$. Let $L_1$ stand for the irreducible $M_S$-representation with highest weight $L_1 \, : \, \lambda(r_1+3,1-r_2)$. Similarly, $L_2$ is the irreducible $M_S$-representation with highest weight $L_2 \, : \, \lambda(r_2+2,-r_1)$. The following result is a consequence of Arthur's classification:

\begin{theorem}
 Let $i=1,2$. If $\pi$ is of general type or of Yoshida type, then $\pi_{\fin}'$ appears with multiplicity one as a Jordan--H\"older factor of the $G(\AA_{\fin})$-representations
 \[ H^{3-i}(X_{G,\QQ},\cV_{\kappa_i(\nu)}(-D)) \otimes \CC \quad \text{ and } \quad H^{3-i}(X_{G,\QQ},\cV_{\kappa_i(\nu)}) \otimes \CC. \]
 Moreover, it appears as a direct summand of both representations, and the map between the two is an isomorphism on this summand.
 If $L$ is any irreducible representation of $M_S$ which is not isomorphic to $V_{\kappa_i(\nu)}$, then the localisations of $H^{3-i}(X_{G,\QQ},[L](-D))$ and $H^{3-i}(X_{G,\QQ},[L])$ at the maximal ideal of the spherical Hecke algebra associated to the $L$-packet of $\pi$ are zero for all $i$.\qed
\end{theorem}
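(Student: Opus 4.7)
The plan is to reduce the statement to known facts about $(\mathfrak{g}, K_\infty)$-cohomology of admissible representations of $\GSp_4(\RR)$, combined with Arthur's classification of the discrete automorphic spectrum for $\GSp_4$ (in the form made precise by Gee--Taïbi). The bridge between coherent and automorphic cohomology is the dual BGG resolution: since $\kappa_i(\nu) = w_i(\nu+\rho) - \rho$, there is a complex of coherent sheaves on $X_{G,\QQ}$ whose terms are direct sums of the $\cV_{\kappa_i(\nu)}$ (with fixed $i$ controlling the BGG degree) and whose hypercohomology computes the de Rham cohomology of $X_{G,\QQ}$ with coefficients in the local system attached to the algebraic $G$-representation $V_\nu$ of highest weight $\nu$. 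Working in the Hodge filtration, the associated spectral sequence degenerates at $E_1$ (Faltings, Lan--Polo), and identifies $H^{3-i}(X_{G,\QQ}, \cV_{\kappa_i(\nu)})$ with a specific Hodge piece of $H^{(3-i)+i}_{dR} = H^3_{dR}$, and similarly with $(-D)$-subscripts for cuspidal de Rham cohomology.

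With this reduction in place, I would apply the Matsushima--Franke decomposition of $H^3_{dR}(X_{G,\QQ}, V_\nu) \otimes \CC$ (and its cuspidal subspace) into an automorphic direct sum indexed by cohomological automorphic representations, weighted by $(\mathfrak{g}, K_\infty)$-cohomology of the archimedean component. For the regular weight $\nu+\rho$, the cohomological archimedean $L$-packet of $\GSp_4(\RR)$ consists of four discrete series; the holomorphic and anti-holomorphic members contribute to the Hodge pieces corresponding to $i=0$ and $i=3$ (the ``holomorphic'' $H^0$ and $H^3$ coherent degrees), while the two generic (large) discrete series contribute precisely to the middle Hodge pieces identified with $H^1$ and $H^2$ of the $\cV_{\kappa_i(\nu)}$ for $i=1,2$; each such contribution is one-dimensional as a $(\mathfrak{g},K_\infty)$-representation.

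The next step is to invoke Arthur's multiplicity formula. Under the assumption that $\pi$ is of general type or of Yoshida type, the global packet of $\pi$ is tempered and contains a unique member at each place, and the multiplicity of $\pi$ in the discrete (in fact cuspidal) spectrum is one. Hence in the cuspidal part of $H^3_{dR}$, the $\pi_{\fin}'$-isotypic component is two-dimensional in total, distributed one-dimensionally across each of the two middle Hodge pieces; translating back via the BGG identification gives that $\pi_{\fin}'$ appears with multiplicity one in each of $H^{3-i}(X_{G,\QQ}, \cV_{\kappa_i(\nu)}(-D)) \otimes \CC$ for $i=1,2$. The final sentence about other $M_{\Si}$-representations follows because those weights do not contribute to the BGG complex for $\nu$, and by strong multiplicity one (combined with the classification of archimedean cohomological packets) no other $V_\nu$ could yield the same Hecke eigensystem as $\pi$.

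The main obstacle is controlling the difference between $H^{3-i}(-D)$ and $H^{3-i}$, i.e.\ showing that the $\pi$-isotypic part of Eisenstein cohomology vanishes so that the natural map is an isomorphism on the $\pi$-summand. This one handles by Franke's description of coherent Eisenstein cohomology in terms of cuspidal data on proper Levi subgroups of $G$ (here Siegel or Klingen Levi, both isomorphic to $\GL_2 \times \GL_1$): for $\pi$ of general or Yoshida type, Arthur's parameters exclude $\pi$ from being equivalent at almost all finite places to any constituent of a parabolic induction from these Levis, so the localisation of any Eisenstein contribution at the maximal ideal associated to $\pi$ is zero. This gives simultaneously the direct-summand property (by Hecke semisimplicity of the ``good'' part of cohomology away from Eisenstein contributions) and the isomorphism between compactly-supported and non-compactly-supported cohomology on the $\pi$-part, completing the proof. \qed
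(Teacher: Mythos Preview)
The paper offers no argument beyond the attribution ``consequence of Arthur's classification'' (the statement ends with a \qed\ symbol), so your sketch is already more detailed than what appears there, and your overall strategy---relating coherent cohomology to automorphic forms and then invoking Arthur's multiplicity formula for the tempered packets---is the intended one.

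Two imprecisions are worth correcting. First, the discrete-series $L$-packet of $\GSp_4(\RR)$ with regular infinitesimal character has \emph{two} members, not four: a holomorphic $\pi_\infty^H$ (contributing to coherent $H^0$ and $H^3$) and a generic $\pi_\infty^W$ (contributing to $H^1$ and $H^2$). Each splits into two pieces upon restriction to the identity component $G(\RR)_+$, which may be the four-term picture you had in mind, but for the $\GSp_4(\RR)$-packet itself one should count two. Second, it is not true that the local packets are singletons: even for general-type parameters, the local packet at a finite place can have two elements, and for Yoshida type this happens systematically. What one actually uses is that Arthur's multiplicity formula gives $m(\pi)=1$ for the specific automorphic $\pi$ under consideration, and that among the archimedean packet members only $\pi_\infty^W$ contributes to the middle coherent degrees. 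For Yoshida type this already presupposes that $\pi_{\fin}\otimes\pi_\infty^W$ is automorphic, which is guaranteed when $\pi$ is globally generic---the case the paper cares about in \S3.

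A minor further remark: the detour through the dual BGG complex and $H^3_{\mathrm{dR}}$ is valid when $\nu$ is dominant (i.e.\ $k_2\ge 3$), but the direct automorphic description of coherent cohomology via $(\mathfrak{p},K_\infty)$-cohomology (Harris, and Su as cited in \S3.1 of the paper) avoids this detour and also covers the limit-of-discrete-series case $k_2=2$.
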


We write $H^i(\pi_{\fin})$ for the $\pi_{\fin}'$-isotypical component of $H^i\left(X_{G,E},[L_1](-D)\right)$, for some number field $E$ over which $\pi_{\fin}'$ is definable.

\section{A pairing on coherent cohomology}

 In this section we define a pairing between coherent cohomology groups which we will later use to study $L$-values for region $(D^-)$.

 \subsection{Automorphic forms as coherent cohomology classes}

  We fix a weight $\nu = (r_1, r_2; c) = (k_1 - 3, k_2 - 3; c)$, for integers $k_1 \ge k_2 \ge 2$. Then there are two discrete-series representations of $\GSp_4(\RR)$ of infinitesimal character $\nu^\vee + \rho$: a holomorphic discrete series $\pi_\infty^H$, corresponding classically to holomorphic Siegel modular forms of weight $(k_1, k_2)$, which contributes to cohomology in degrees 0 and 3; and a generic discrete series $\pi_\infty^{W}$, which contributes in degrees 1 and 2.

  More canonically, we can write this as follows. Let $K_{\infty} = \RR^{\times} \cdot U_2(\RR)$ denote the maximal compact-mod-centre subgroup of $G(\RR)_+$. The representation $\pi_{\infty}^W$ has two direct summands as a $G(\RR)_+$-representation, $\pi_{\infty}^W = \pi_{\infty, 1} \oplus \pi_{\infty, 2}$, which have minimal $K_{\infty}$-types $\tau_1=(r_1+3, -r_2-1)$ and $\tau_2 = (r_2+1, -r_1-3)$, respectively. Since the minimal $K_{\infty}$-type in an irreducible discrete series has multiplicity 1, we have $\dim \Hom_{K_{\infty}}(\tau_i, \pi_{\infty}) = 1$ for $i=1,2$. Then, for each automorphic representation $\pi$ whose Archimedean component is $\pi_\infty^W$, we have a canonical isomorphism of irreducible smooth $G(\AA_{\fin})$-representations
  \[ \Hom_{K_{\infty}}(\tau_i, \pi) \{ \frac{r_1+r_2}{2} \} \cong H^{3-i}(X_{G, \QQ}, \cV_{\kappa_i}(-D))_{\CC}[\pi_\mathrm{f}]. \]

  \begin{remark}
   For weights $(k_1, k_2)$ sufficiently far from the walls of the Weyl chamber (so there are no non-tempered representations contributing to the cohomology), this is proved in \cite{harriskudla92}. It follows from the results of \cite{su-preprint}, together with Arthur's classification of discrete-series representations of $\GSp_4$, that the result in fact applies for all weights.
  \end{remark}

  Given $\xi \in H^{3-i}(X_{G, \QQ}, \cV_{\kappa_i}(-D))_{\CC}[\pi_\mathrm{f}]$, we denote by $F_\xi$ the corresponding homomorphism $\tau_i \to \pi$; we may consider $F_{\xi}$ as a harmonic vector-valued cusp form on $G$, taking values in the representation $\tau_i^\vee$.

  We use similar notations for $\GL_2$: for any $k \ge 1$, the space $H^0(X_{\GL_2}, \cV_{(-k)})$ is isomorphic to weight $k$ modular forms; and $H^1(X_{\GL_2}, \cV_{(k-2)})$ is isomorphic to the space of anti-holomorphic modular forms of weight $-k$ (i.e.~complex conjugates of holomorphic forms of weight $k$). We write $\omega \to F_{\omega}$ and $\eta \to F_{\eta}$ for these isomorphisms.

 \subsection{Whittaker periods}

  Suppose $\pi$ is a cuspidal automorphic representation with Archimedean component $\pi_\infty^W$, and which is globally generic. Let $E$ be the coefficient field of $\pi$.

  If we choose $j \in \{1, 2\}$ then there is a canonical basis of $\Hom_{K_\infty}(\tau_{3-j}, \cW(\pi_\infty^W))$ computed by Moriyama (which we shall recall in more detail below). From this we obtain two $E$-rational structures on $\Hom_{K_\infty}(\tau_j, \pi)$: one via the isomorphism to coherent $H^j$, and one via tensoring Moriyama's basis vector at $\infty$ with the canonical $E$-structure on the Whittaker model of $\pi_{\fin}$. Since $\pi_{\fin}$ is irreducible, these must differ by a constant in $\CC^\times / E^\times$.
  \begin{notation}
   We denote this scalar factor by $\Omega^{(j)}_{\pi} \in \CC^\times / E^\times$, the \emph{$H^j$ Whittaker period} of $\pi$.
  \end{notation}

  The period denoted by $\Omega^W_{\pi}$ in \cite{LPSZ1} (appearing in rationality results for case $(F)$) is the $H^2$ Whittaker period. In the present work, it is the $H^1$ Whittaker period which appears instead. (It is far from clear \emph{a priori} how these periods are related to each other.)

  Similar considerations apply to holomorphic cuspidal $\GL_2$ representations $\sigma$. In this case we obtain a period for $H^0$ and a period for $H^1$. We may choose our standard Whittaker functions at $\infty$ so that the Whittaker-rational classes are those whose $q$-expansions have coefficients in $E$. By the $q$-expansion principle the $H^0$ Whittaker period is just 1. On the other hand, since the Serre duality pairing on coherent cohomology preserves the $E$-rational structures, and this duality pairing corresponds to the Petersson product on automorphic forms, the $H^1$ Whittaker period must be given by the Petersson norm of the normalised newform generating $\sigma$.

 \subsection{Pullback to $H$}

  Now let $(c_1,c_2)$ be a pair of integers, with $c_1 + c_2 = k_1 + k_2 \bmod 2$, and satisfying the inequalities
  \[ 1 \le c_1, \quad k_1-k_2+2 \leq c_2-c_1, \quad c_2 \leq k_1 \]
  defining the region $(D^-)$. We denote by $\lambda$ the weight $(-c_1, c_2 - 2)$ for $H$. We want to define a pairing
  \begin{equation}
   \label{eq:pairing}
   H^1\left(X_{G, \QQ}, \cV^G_{\kappa_2}(-D)\right) \times H^1\left(X_{H, \QQ}, \cV^H_{\lambda}\right) \to \QQ.
  \end{equation}
  Let us define $t = \frac{(c_2 - c_1) - (k_1 - k_2 + 2)}{2} \geq 0$.

 \subsection{The $t = 0$ case}

  Let us first suppose that we have $c_2 - c_1 = k_1 - k_2 + 2$; this is the situation studied in \S 2.5 of \cite{harriskudla92}. We have $\kappa_2(\nu) = (k_2 - 4, -k_1)$ and one computes that the pullback of $\cV_{\kappa_2(\nu)}$ to $H$ is given by the direct sum
  \[ \bigoplus_{0 \le j \le k_1 + k_2 - 4} \cV^H_{(k_2 - 4 -j, j-k_1)}. \]
  Since $c_2 \le k_1$, this implies that one of the direct summands in $\iota^*(\cV_{\kappa_2(\nu)})$ is $\cV^H_{(c_1 - 2, -c_2)}$; so we obtain a pullback map
  \begin{align*}
   \iota^* : H^1(X_{G, \QQ}, \cV^G_{\kappa_2(\nu)}(-D)) &\to H^1(X_{H, \QQ}, \cV^H_{(c_1 - 2, -c_2)}(-D)) \\
    &= \left[  H^1(X_{H, \QQ}, \cV^H_{(-c_1, c_2-2)})\right]^\vee \quad\text{(by Serre duality)},
  \end{align*}
  defining a pairing between the groups \eqref{eq:pairing}, which can be explicitly expressed as the integral
  \[ \langle \xi, \omega \boxtimes \eta \rangle =
   \frac{1}{(2\pi i)^2}\int_{\RR^\times H(\QQ) \backslash H(\AA)} F_{\xi}(v_{c_1, -c_2})(\iota(h)) F_\omega(h_1) F_\eta(h_2)\, \mathrm{d}h, \]
  where $v_{c_1, -c_2}$ is the weight $(c_1, -c_2)$ standard basis vector of $\tau_2$. Note that the integrand is invariant under $K_\infty$. (It is also invariant under a finite-index subgroup of $\AA^\times$; in practice we will be interested in the case when $\xi$, $\omega$, and $\eta$ have product 1, so we may instead take the integral over $\AA^\times H(\QQ) \backslash H(\AA)$.)

 \subsection{General $t \ge 0$}

  In general, let us write $c_1' = c_1 + 2t = c_2 - (k_1 - k_2 + 2)$, so that $(c_1', c_2)$ satisfies the assumptions of the previous section. Then the \emph{Maass--Shimura derivative} $\delta^t$ sends the holomorphic form $F_\omega$ to a non-holomorphic automorphic form $F^{(t)}_\omega$ of $K_\infty$-type $-c_1'$ (i.e.~a nearly-holomorphic form of weight $c_1'$ in the sense of Shimura). So we may form the more general integral
  \[ \langle \xi, \delta^t(\omega) \boxtimes \eta\rangle =
   \frac{1}{(2\pi i)^2}\int_{\RR^\times H(\QQ) \backslash H(\AA)} F_{\xi}(v_{c_1', -c_2})(\iota(h))
   \delta^t F_\omega(h_1) F_\eta(h_2)\, \mathrm{d}h,
  \]
  which \emph{a priori} defines a pairing between the groups in \eqref{eq:pairing} after extension to $\CC$; our goal is to show that it respects the rational structures.

  As in many previous works (e.g. \cite{Urban-nearly-overconvergent, darmonrotger14}), we can interpret $F^{(t)}_\omega$ as a section of a larger vector bundle $\widetilde{\cV}_{(-c1')} \supseteq \cV_{(-c_1')}$, corresponding to a reducible representation of $B_{\GL_2}$. Arguing as in \cite{LPSZ1}, we can find a sheaf $\widetilde{\cV}^G_{\kappa_2} \twoheadrightarrow \cV^G_{\kappa_2}$ (defined as a subquotient of the Hodge filtration on the sheaf attached to the algebraic representation of weight $\nu$), and a pairing $\iota^*\left(\widetilde{\cV}^G_{\kappa_2}\right) \times  \widetilde{\cV}^H_{(-c1', c_2 - 2)} \to \Omega^1_{X_H}$, which is compatible with the obvious pairing $\iota^*\left(\cV^G_{\kappa_2}\right) \times \cV^H_{(-c1', c_2 - 2)} \to \Omega^1_{X_H}$. Moreover, the map on $H^1(X_{G, \QQ}, -)$ induced by the quotient map $\widetilde{\cV}^G_{\kappa_2} \twoheadrightarrow \cV^G_{\kappa_2}$ is an isomorphism on the $\pi_{\fin}$-eigenspace. So we can interpret $\langle \xi, \delta^t(\omega) \boxtimes \eta\rangle$ as a cup-product in the cohomology of these larger sheaves; in particular, it respects the $E$-structures on the cohomology groups, where $E$ is the rationality field of $\pi_{\fin}$.

  \begin{remark}
   If $(c_1, c_2)$ lies in region $(D)$, but not in the subregion $(D^-)$, then the above construction does not work, because the $K^H_{\infty}$-type $(c_1', -c_2)$ no longer appears in $\tau_2$. It seems possible that this can be ``repaired'' by applying differential operators to $F_{\xi}$, rather than to the two $\GL_2$ factors; we hope to investigate this further in a future work.
  \end{remark}

\section{Rationality results for Gan--Gross--Prasad periods}

  Let $\pi$ be as in the previous section; and let $\sigma_1, \sigma_2$ be cuspidal automorphic representations of $\GL_2$, generated by holomorphic cuspidal modular forms of weights $c_1$ and $c_2$ respectively, such that $(c_1, c_2)$ lies in region $(D^-)$ of our diagram. We suppose that $\omega$ and $\eta$ are in the $\sigma_1$, resp.~$\sigma_2$, isotypic part of the cohomology groups.

  \begin{theorem}\label{thm:GGP}
   If all three classes $\xi, \omega, \eta$ are defined over some number field $E$ as coherent cohomology classes, then the period $\langle \xi, \delta^t(\omega) \boxtimes \eta\rangle$ is in $E$.
  \end{theorem}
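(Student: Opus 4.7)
The plan is to reduce the claimed rationality to a Serre-duality-type pairing on the algebraic coherent cohomology of $X_{G,\QQ}$ and $X_{H,\QQ}$, using the ``nearly-holomorphic = section of a larger sheaf'' trick already sketched in the preceding subsection.

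First I would dispose of the base case $t = 0$. In that case, $(c_1, c_2)$ satisfies $c_2 - c_1 = k_1 - k_2 + 2$, and the summand $\cV^H_{(c_1-2, -c_2)}$ sits inside $\iota^*\cV^G_{\kappa_2(\nu)}$. The pairing is then literally
\[
H^1(X_{G,\QQ}, \cV^G_{\kappa_2(\nu)}(-D)) \xrightarrow{\iota^*} H^1(X_{H,\QQ}, \cV^H_{(c_1-2, -c_2)}(-D)) \times H^1(X_{H,\QQ}, \cV^H_{(-c_1, c_2-2)}) \xrightarrow{\cup} H^2(X_{H,\QQ}, \Omega^2) \xrightarrow{\tr} \QQ,
\]
a sequence of morphisms of coherent cohomology groups defined over $\QQ$. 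Hence $E$-rational classes pair to elements of $E$, and the integral representation of the pairing is exactly the $t=0$ formula (the factor $\frac{1}{(2\pi i)^2}$ being absorbed into the Serre-duality trace). This settles the case $t=0$.

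For general $t \ge 0$, I would exploit the fact that $\delta^t F_\omega$ is a nearly-holomorphic form of weight $c_1' = c_1 + 2t$, and therefore (by the standard formalism recalled in the previous subsection, going back to Katz, Harris, and Urban) corresponds to a \emph{holomorphic} section of the larger vector bundle $\widetilde{\cV}^H_{(-c_1')}$ attached to the appropriate filtered $B_H$-representation. The key geometric input, already invoked in the paper, is the existence of the enlarged $G$-side sheaf $\widetilde{\cV}^G_{\kappa_2}$ with a surjection $\widetilde{\cV}^G_{\kappa_2} \twoheadrightarrow \cV^G_{\kappa_2}$ (cut out of the Hodge filtration on the algebraic representation of weight $\nu$) and a $\QQ$-rational pairing
\[
\iota^*\widetilde{\cV}^G_{\kappa_2} \times \widetilde{\cV}^H_{(-c_1', c_2 - 2)} \longrightarrow \Omega^1_{X_H},
\]
compatible with the obvious pairing on the quotients $\cV$. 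Since the surjection induces an isomorphism on the $\pi_{\fin}$-isotypic part of $H^1$, the class $\xi$ lifts canonically to $\widetilde H^1(X_{G,\QQ}, \widetilde{\cV}^G_{\kappa_2}(-D))$ still rational over $E$; likewise $\delta^t \omega \boxtimes \eta$ is an $E$-rational class in $H^1(X_{H,\QQ}, \widetilde{\cV}^H_{(-c_1', c_2-2)})$ because $q$-expansions of $\delta^t$ applied to an $E$-rational form remain in $E$.

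Finally I would identify the integral $\langle \xi, \delta^t(\omega) \boxtimes \eta\rangle$ with the cup-product in the larger sheaves, followed by the Serre-duality trace on $X_{H,\QQ}$. Both ingredients are defined over $\QQ$ and preserve rationality, so the resulting pairing of $E$-rational classes lies in $E$. The main obstacle, and the only place where I anticipate real technical work, is verifying that the explicit Archimedean integral against the standard basis vector $v_{c_1', -c_2}$ of $\tau_2$ matches the algebraic cup-product in the larger sheaves — in effect, that the ``extra'' components in $\widetilde{\cV}^G_{\kappa_2}$ pair to zero with the holomorphic (non-derivative) piece of $\widetilde{\cV}^H_{(-c_1')}$, so that only the $\delta^t$-part of $\omega$ contributes. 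This is exactly the analogue of the coherent-cohomology verification done in \cite{LPSZ1} for region $(F)$, transplanted from $H^2$ to $H^1$, and I would follow the strategy there: express both sides in local coordinates at the boundary of a point of $X_H$ and match the resulting polynomial pairings using the explicit form of the Maass--Shimura operator.
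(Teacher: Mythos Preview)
Your proposal is correct and follows essentially the same route as the paper: the argument is precisely the content of \S3.4--3.5, where the $t=0$ case is a literal Serre-duality pairing and the general case is handled by lifting $\xi$ to the enlarged sheaf $\widetilde{\cV}^G_{\kappa_2}$ (using the isomorphism on the $\pi_{\fin}$-eigenspace) and interpreting the integral as a cup-product there, with the compatibility of the automorphic and algebraic pairings deferred to \cite{LPSZ1}. Your ``main obstacle'' is slightly overstated --- the paper treats the identification of the integral with the tilde-sheaf cup-product as a direct transplant of the \cite{LPSZ1} argument rather than requiring new local computations --- but your instinct to flag it and cite the same source is sound.
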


  Since the relation between rationality as coherent cohomology classes and rationality in the Whittaker model is given by the periods defined above, we can reformulate this as follows:

  \begin{corollary}
   If $\xi$, $\omega$ and $\eta$ are defined over $E$ in the respective Whittaker models, then we have
   \[ \frac{\langle \xi, \delta^t(\omega) \boxtimes \eta\rangle}{\Omega^{(1)}_\pi \cdot \langle h, h \rangle} \in E, \]
   where $h$ is the normalised holomorphic newform generating $\sigma_2$.
  \end{corollary}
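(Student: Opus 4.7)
The plan is to reduce the Corollary directly to \cref{thm:GGP} by tracking, factor by factor, how Whittaker-rationality and coherent-cohomology rationality differ; the task is simply to insert the period constants at the correct places in the triple $(\xi,\omega,\eta)$ and then invoke the theorem on the rescaled, coherent-rational classes.

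First, by the very definition of $\Omega^{(1)}_\pi$ given in \S 3.2, a class $\xi$ in the $\pi_{\fin}$-isotypic part of $H^1(X_{G,\QQ},\cV^G_{\kappa_2(\nu)}(-D))_\CC$ is $E$-rational in the Whittaker model if and only if $\xi/\Omega^{(1)}_\pi$ is $E$-rational as a coherent cohomology class (both viewed modulo $E^\times$). Next, decomposing $H^1(X_H,\cV^H_\lambda)$ by Künneth with $\lambda=(-c_1,c_2-2)$, the holomorphic weight $c_1$ form generating $\sigma_1$ contributes through $H^0$ of $\cV^{\GL_2}_{(-c_1)}$ on the first factor, while the antiholomorphic weight $c_2$ form attached to $\sigma_2$ contributes through $H^1$ of $\cV^{\GL_2}_{(c_2-2)}$ on the second factor. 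As already recalled in \S 3.2, the $q$-expansion principle makes the $H^0$ Whittaker period trivial, so $\omega$ is coherently $E$-rational if and only if it is Whittaker $E$-rational; and Serre duality combined with the compatibility of the duality pairing and the Petersson product identifies the $H^1$ Whittaker period with $\langle h,h\rangle$, where $h$ is the normalised newform generating $\sigma_2$.

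Combining these three translations, if $\xi,\omega,\eta$ are $E$-rational in the Whittaker models then $\xi/\Omega^{(1)}_\pi$, $\omega$ and $\eta/\langle h,h\rangle$ are $E$-rational as coherent cohomology classes. Applying \cref{thm:GGP} to this rescaled triple and using trilinearity of the pairing $\langle -,\delta^t(-)\boxtimes -\rangle$ in $(\xi,\omega,\eta)$ yields
\[
  \frac{\langle \xi, \delta^t(\omega)\boxtimes\eta\rangle}{\Omega^{(1)}_\pi\cdot\langle h,h\rangle}\in E,
\]
as required. The argument is essentially formal once \cref{thm:GGP} is granted; the only real point to watch is the Künneth bookkeeping, which forces the Petersson norm to attach to $\sigma_2$ (the factor contributing to $H^1$) rather than to $\sigma_1$, thereby explaining the asymmetry in the denominator.
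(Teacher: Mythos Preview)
Your proof is correct and follows exactly the approach the paper indicates: the Corollary is stated in the paper as an immediate reformulation of \cref{thm:GGP} via the Whittaker periods of \S3.2, and you have simply unpacked that reformulation carefully, identifying which period attaches to each of the three factors. The Künneth-type bookkeeping explaining why the Petersson norm attaches to $\sigma_2$ rather than $\sigma_1$ is a helpful clarification of what the paper leaves implicit.
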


  We briefly recall the relation between this period and central $L$-values. If $\pi$ and $\sigma_1 \otimes \sigma_2$ have trivial central characters (and thus factor through $\SO_5$ and $\SO_4$ respectively), and the local root numbers $\varepsilon_v(\pi_v \times \sigma_{1, v} \times \sigma_{2, v})$ are $+1$ for all finite places, then the Ichino--Ikeda conjecture \cite{ichinoikeda10} predicts a formula for the absolute value of the global period. This has the form
  \[ \frac{|\langle \xi, \delta^t(\omega) \boxtimes \eta\rangle|^2}{\|F_\xi\|^2 \cdot \|F_{\delta^t(\omega) \boxtimes \eta}\|^2} = (\star) \cdot \frac{L(\pi \times \sigma_1 \times \sigma_2, \tfrac{1}{2})}{L(\operatorname{ad} \pi, 1) L(\operatorname{ad} \sigma_1, 1) L(\operatorname{ad}\sigma_2, 1)} \prod_v c_v, \]
  where $(*)$ is an explicit factor, and $c_v$ are local matrix coefficients (which are nonzero, and equal to 1 for all but finitely many places). So if the Ichino--Ikeda conjecture holds, then Theorem \ref{thm:GGP} determines $L(\pi \times \sigma_1 \times \sigma_2, \tfrac{1}{2})$ up to an algebraic factor (although making this explicit would involve computing the local matrix coefficient $c_v$, which is a nontrivial task, particularly for $v = \infty$).

\section{Novodvorsky's zeta integral}

 We now return to the case considered in the introduction, so $\pi \times \sigma$ is an automorphic representation of $\GSp_4 \times \GL_2$ and the weights $(k_1, k_2, \ell)$ satisfy the inequalities $(D)$ of Table \ref{table1}. We consider Novodvorsky's zeta integral
 \[ Z(F_0, \Phi_1, F_2; s) = \int_{Z_H(\AA) H(\QQ) \backslash H(\AA)} F(h) E^{\Phi_1}(h_1; \chi, s) F'(h_2) \mathrm{d}h, \]
 where $F$ and $F'$ are automorphic forms in $\pi$ and $\sigma$ respectively, and $E^{\Phi_1}(h_1; \chi, s)$ is an Eisenstein series, depending on a choice of Schwartz function $\Phi_1 \in \mathcal{S}(\AA^2)$.

 \subsection{Expression via coherent cohomology}

  We first show how this zeta integral can be interpreted as a coherent cup product of the type considered in the previous sections, but with $\omega$ an Eisenstein, rather than cuspidal, form. We take $F_0 = F_\xi$ and $F_1 = F_\eta$, for coherent cohomology classes $\xi$, $\eta$ contributing to $H^1(X_G)$ and $H^1(X_{\GL_2})$, as before (so we are taking $c_2 = \ell$). We suppose $\ell \le k_1$ (so we are in case $(D^-)$).

  As explained in \cite{LPSZ1}, for suitable choices of parameters $E^{\Phi_1}(h_1; \chi, s)$ is a nearly-holomorphic Eisenstein series.

  We take $\Phi_{1, \infty}(x, y) = 2^{1-c_1'} (x + iy)^{c_1'} \exp(-\pi(x^2 + y^2))$, where $c_1' = \ell - (k_1 - k_2 + 2) \ge 1$; and let $\Phi$ be any Schwartz function of the form $\Phi_{1, \fin} \times \Phi_{1, \infty}$, for this particular $\Phi_{1, \infty}$ and any Schwartz function on $\AA_{\fin}^2$. The condition for $s$ to be a critical value is precisely that $s = \tfrac{c_1'}{2} \bmod \ZZ$ and $|2s - 1| \le c_1' - 1$; and for $s$ satisfying this, $E^{\Phi_1}(-; \chi, s)$ is nearly-holomorphic (of weight $c_1'$. Moreover, if we let $c_1 = 1 + |2s - 1|$ and $t = \tfrac{c_1' - c_1}{2} \in \ZZ_{\ge 0}$, then there is a holomorphic Eisenstein series of weight $c_1$ whose image under $\delta^t$ is $E^{\Phi_1}(-; \chi, s)$. (For $\Phi_{1, \fin}$ of a certain specific form this is proved in \cite[Corollary 5.2.1]{leiloefflerzerbes14}; the general proof is no different.)

  \begin{remark}
   One can check that  $\Phi_{1, \fin}$ takes values in our fixed number field $E$, then $E^{\Phi_1}(-; \chi_1, s)$ is defined over that number field (as a coherent cohomology class).
  \end{remark}

  \begin{corollary}
   If $\Phi_{\fin}$ is $E$-valued, and $\xi$, $\eta$ are defined over $E$ as coherent cohomology classes, then $Z(F_0, \Phi_1, F_2; s)$ lies in $(2\pi i)^2 E$.
  \end{corollary}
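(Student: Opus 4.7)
The plan is to reduce the claim to the coherent-cohomology pairing studied in \cref{thm:GGP}, applied with $\omega$ now an Eisenstein class rather than a cuspidal class. The argument will run as follows.

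First, I would use the discussion preceding the corollary to write $E^{\Phi_1}(-;\chi,s) = \delta^t F_\omega$, where $\omega$ is a holomorphic Eisenstein series of weight $c_1 = 1 + |2s-1|$ and $t = (c_1' - c_1)/2 \in \ZZ_{\ge 0}$. The crucial preliminary observation is that $\omega$ is defined over $E$ as a coherent cohomology class in $H^0(X_{\GL_2},\cV_{(-c_1)})$. This follows by standard arguments (as in \cite[Cor.~5.2.1]{leiloefflerzerbes14}): the $q$-expansion of the Eisenstein series associated to an $E$-valued Schwartz function $\Phi_{1,\fin}$ has $E$-rational Fourier coefficients, and by the $q$-expansion principle the Whittaker-rational and coherent-rational structures on $H^0$ coincide.

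Next, I would identify the zeta integral with the coherent pairing. Comparing the integral defining $Z(F_0,\Phi_1, F_2;s)$ with the formula
\[
\langle \xi, \delta^t(\omega)\boxtimes \eta\rangle = \frac{1}{(2\pi i)^2}\int_{\RR^\times H(\QQ)\backslash H(\AA)} F_\xi(v_{c_1',-c_2})(\iota(h))\,\delta^t F_\omega(h_1)\, F_\eta(h_2)\, \mathrm{d}h,
\]
and using that the product of central characters is trivial (so the integrals over $\RR^\times H(\QQ)\backslash H(\AA)$ and $Z_H(\AA)H(\QQ)\backslash H(\AA)$ agree up to the finite-idelic volume factor, which lies in $E^\times$), one obtains $Z(F_0,\Phi_1,F_2;s) = (2\pi i)^2 \cdot \langle \xi,\delta^t(\omega)\boxtimes \eta\rangle$ up to an $E^\times$ scalar.

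Finally, I would invoke the same cup-product argument used to prove \cref{thm:GGP}: using the larger sheaves $\widetilde{\cV}^G_{\kappa_2}$ and $\widetilde{\cV}^H_{(-c_1',c_2-2)}$ from \S3.5, the pairing $\langle \xi,\delta^t(\omega)\boxtimes\eta\rangle$ can be realised as a Serre-duality cup-product in coherent cohomology of these sheaves on $X_G$ and $X_H$, all defined over $E$. Since the constituent classes are $E$-rational — $\xi$ and $\eta$ by hypothesis, and $\omega$ by the preceding paragraph — rationality of the cup product in $E$ follows. This gives $\langle \xi,\delta^t(\omega)\boxtimes\eta\rangle \in E$, whence $Z(F_0,\Phi_1,F_2;s)\in (2\pi i)^2 E$.

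The only genuinely non-routine step is verifying the $E$-rationality of $\omega$ as a coherent class from the $E$-valuedness of $\Phi_{1,\fin}$; everything else is a direct transposition of the cuspidal case, since the proof of \cref{thm:GGP} never used cuspidality of $\sigma_1$ in an essential way — only that its associated automorphic form represents an $E$-rational class in coherent $H^0$ of $X_{\GL_2}$ (for the Eisenstein factor) and that the Maass--Shimura operator $\delta^t$ is compatible with the extended sheaf constructions. In particular, no new archimedean or local computation is required.
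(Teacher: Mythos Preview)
Your proposal is correct and follows essentially the same approach as the paper: the corollary is intended as an immediate consequence of the preceding discussion, namely the identification of $Z(F_0,\Phi_1,F_2;s)$ with $(2\pi i)^2\langle \xi,\delta^t(\omega)\boxtimes\eta\rangle$, together with the $E$-rationality of the Eisenstein class $\omega$ (stated in the Remark just before the corollary) and the sheaf-theoretic argument of \S3.5 showing the pairing respects $E$-structures. Your observation that the \S3.5 argument does not use cuspidality of $\sigma_1$ is exactly the point.
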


 \subsection{Eulerian factorisation}

  As explained in \cite{LPSZ1}, if $\pi$ and $\sigma$ are generic, then we can write this as an integral in terms of the Whittaker functions $W_0$ and $W_2$ associated to $F_0$ and $F_2$. If the data $W_0, \Phi_1, W_2$ are factorisable as products of local data, then the global integral has a corresponding factorisation as $\prod_v Z_v(W_{0, v}, \Phi_{1, v}, F_{2, v}; s)$, where
  \[ Z_v(W_{0, v}, \Phi_{1, v}, F_{2, v}; s) =
   \int_{(Z_H N_H\backslash H)(\QQ_v)} W_{0, v}(h) f^{\Phi_{1, v}}(h_1; \chi_v, s) W_{2, v}(h_2)\, \mathrm{d}h. \]
  For all but finitely many places, the local integral $Z_v$ is equal to the $L$-factor $L(\pi_v \times \sigma_v, s)$, so we can write
  \[ Z(F_0, \Phi_1, F_2; s) = L(\pi \times \sigma, s) \cdot Z_\infty(W_{0, \infty}, \Phi_{1, \infty}, W_{2, \infty}; s) \cdot  \prod_{\ell \in S} \frac{Z_v(W_{0,\ell}, \Phi_{1, \ell}, W_{2, \ell}; s)}{L(\pi_\ell \times \sigma_\ell, s)}, \]
  where $S$ is a finite set of (finite) primes.

  For $v = \ell$ a finite prime in $S$, the ratio $\dfrac{Z_\ell(W_{0, \ell}, \Phi_{1, \ell}, W_{2, \ell}; s)}{L(\pi_\ell \times \sigma_\ell, s)}$ is a rational function in $\ell^{-s}$ and $\ell^s$. The local assumption on $\pi \times \sigma$ in \cref{thm:main} implies that this function is actually a polynomial, and the ideal generated by these polynomials (as $W_{0, \ell}, \Phi_{1, \ell}, W_{2, \ell}$ vary) is the unit ideal. One can check that if $s = \tfrac{w}{2} \bmod \ZZ$, and the data $(W_{0, \ell}, \Phi_{1, \ell}, W_{2, \ell})$ are defined over $E$, then the zeta integral is itself $E$-valued.

  If we choose test data of this form at the local places, and equal to the standard Moriyama test data at $\infty$, then we can conclude that
  \[ Z(F_0, \Phi_1, F_2; s) \in E^\times \cdot Z_\infty \cdot L(\pi \times \sigma, s)\]
  where $Z_\infty$ is the local zeta integral for Moriyama's standard test data at $\infty$; but we also have
  \[ Z(F_0, \Phi_1, F_2; s) \in E^\times \cdot (2\pi i)^2 \langle h ,h \rangle \Omega^{(1)}_{\pi}.\]

  Thus, in order to prove \cref{thm:main}, it remains to compute the Archimedean local integral $Z_\infty$.

\section{Archimedean zeta integrals}

\subsection{Zeta-integral generalities}

 For $\Pi$ a smooth representation of $\GSp_4(F)$, where $F$ is a local field (archimedean or not), we have the two-parameter $\GSp_4 \times \GL_2$ zeta-integral \[ Z(W, \Phi_1, \Phi_2; \chi_1, \chi_2, s_1, s_2). \]

 If we write this in terms of Bessel models, it is
 \begin{equation}\label{eq:Bessel}
 \int_{D N_H \backslash H} B_W(h; s_1 - s_2 + \tfrac{1}{2}) f^{\Phi_1}(h_1; s_1, \chi_1) f^{\Phi_2}(h_2; s_2, \chi_2) \mathrm{d}h.
 \end{equation}

 We have the Iwasawa decomposition $H = B_H K_H$, where $K_H$ is the maximal compact. If the data are chosen so that the integrand is $K$-invariant, then (using the fact that the $f^{\Phi}$'s live in principal-series representations, so have a known transformation property under $B_H$) we obtain
 \[  Z(W, \Phi_1, \Phi_2, s_1, s_2) = f^{\Phi_1}(1; \chi_1, s_1) f^{\Phi_2}(1; \chi_2, s_2)
    \int_{F^\times} B_W(\begin{smatrix} t \\ & t \\ &&1 \\ &&&1\end{smatrix}; s_1 - s_2 + \tfrac{1}{2}) |t|^{(s_1 + s_2 - 2)} \, \mathrm{d}^\times t.
 \]

 If $F = \RR$, then we have an explicit formula for $\int_{F^\times} B_W(\dots)(\dots)$ due to Moriyama, which we recall below.

 We want to use this to study $\GSp_4 \times \GL_2$ zeta-integrals. For $c \in \ZZ_{\ge 1}$ there is a (limit-of-)discrete-series representation $D_c^+$ of $\SL_2(\RR)$, corresponding to weight $c$ holomorphic modular forms, whose lowest $K$-type is $\stbt {\cos \theta}{\sin \theta}{-\sin\theta}{\cos\theta} \mapsto e^{i c \theta}$. The Whittaker function of the lowest $K$-type vector is given along the torus by \[ W^{(k)}(\stbt t 0 0 1) = t^{c/2} e^{-2 \pi t} \] (up to an arbitrary scalar, but if we want our Whittaker functions to match up with the conventional notion of $q$-expansions then this is clearly the good normalisation).

 We can embed $D_c^+$ into a principal-series representation $I(|\cdot|^{s-1/2}, |\cdot|^{1/2 - s} \chi^{-1})$, taking $\chi = (\text{sign})^c$ and $s = \tfrac{c}{2}$. Then the principal series is reducible, with $D_c^+ \oplus D_c^-$ as a subrepresentation and a finite-dimensional representation as quotient.

 If we consider the function $\Phi^{(c)}(x, y) = 2^{1-c} (x + iy)^c e^{-\pi(x^2 + y^2)}$, then $f^{\Phi^{(c)}}(-; \operatorname{sign}^c, s)$ has the same $K$-type, and it lives in $I(|\cdot|^{s-1/2}, |\cdot|^{1/2 - s} \operatorname{sign}^c)$. Specializing at $s = \tfrac{c}{2}$, $f^{\Phi^{(c)}}(-; \operatorname{sgn}^c, \tfrac{c}{2})$ must land in the $D_c^+$ subrepresentation, since its $K$-type does not appear in any of the other factors. Hence, its image under the Whittaker transform, $W^{\Phi^{(c)}}(-, \operatorname{sgn}^c, c/2)$, must be a scalar multiple of the above Whittaker function.

 If we evaluate the Whittaker function $W^{\Phi^{(c)}}(-, \operatorname{sgn}^c, c/2)$ at the identity, we are led to a rather nasty definite integral, which eventually turns out to be $e^{-2\pi}$. This is the value at 1 of the normalised Whittaker function above, so our normalisations are compatible (the archimedean analogue of the compatibility noted in \cite[p4097]{LPSZ1}). That is, if we substitute $\Phi_2 = \Phi^{(c_2)}$, $s_2 = \tfrac{c_2}{2}$, and $\chi_2 = \operatorname{sgn}^{c_2}$ in Moriyama's formulae, and let $s = s_1$, we obtain a formula for the $\GSp_4 \times \GL_2$ zeta integral $Z(W, \Phi_1, W^{(c_2)}; s)$.

 \subsection{Choosing the parameters}

  Let $(r_1, r_2)$ be integers with $r_1 \ge r_2 \ge -1$. This determines an $L$-packet of representations of $\GSp_4(\RR)$, which are discrete-series if $r_2 \ge 0$ and limit-of-discrete-series if $r_2 = -1$, as usual.

  \begin{notation}
  In the notations of Moriyama's paper \cite{moriyama04}, let $(\lambda_1, \lambda_2) = (r_1 + 3, -1-r_2)$; and set $d = \lambda_1 - \lambda_2 = r_1 + r_2 + 4$.
  \end{notation}
  Then we have the inequalities $1 - \lambda_1 \le \lambda_2 \le 0$ that Moriyama requires (and in fact strict inequality holds). Attached to $(\lambda_1, \lambda_2)$, Moriyama defines a pair of discrete / limit-of-discrete series $\Sp_4(\RR)$-representations $D_{(\lambda_1, \lambda_2)}$ and $D_{(-\lambda_2, -\lambda_1)}$, with $D_{(\lambda_1, \lambda_2)}$ contributing to coherent $H^1$, and $D_{(-\lambda_2, -\lambda_1)}$ to coherent $H^2$. Note that the Whittaker functions of $D_{(-\lambda_2, -\lambda_1)}$ are supported on $\GSp_4^+(\RR)$, while the Whittaker functions of the dual representation are supported on the non-identity component.

  We let $\Pi_\infty$ be the unique representation of $\GSp_4(\RR)$ whose restriction to $\Sp_4(\RR)$ is $D_{\lambda_1, \lambda_2} \oplus D_{-\lambda_2, -\lambda_1}$, and whose central character is trivial on $\RR_{> 0}$.

  \begin{remark}
   Our parametrisation of the $K$-types follows \cite{harriskudla92}, and unfortunately the conventions of Harris--Kudla and Moriyama are not the same; so in our notations, $D_{(\lambda_1, \lambda_2)}$ has minimal $K$-type $(-\lambda_2, -\lambda_1)$ and $D_{(-\lambda_2, -\lambda_1)}$ has highest $K$-type $(\lambda_1, \lambda_2)$ (sic!).
  \end{remark}

\subsection{Moriyama's result}

Moriyama defines explicit Whittaker functions $W_k$, for $0 \le k \le \lambda_1 -\lambda_2$, giving a basis of the minimal $K$-type of $D_{(-\lambda_2, -\lambda_1)}$. In Proposition 8 of \emph{op.cit.} he states a formula for the integral
\[Z(s, y_1; W) =  \int_{y \in \RR^\times} \int_{x \in \RR} W\left(
\begin{pmatrix}
 uy \\ &  y \\ & x & 1 & \\ &&& u^{-1}
\end{pmatrix}\right) |y|^{s - 3/2}\, \mathrm{d}x\, \mathrm{d}^\times y,
\]
for $u > 0$ an auxiliary parameter. This involves a Mellin inversion integral, and we get rid of this by taking the forward Mellin integral to get a formula for the Mellin transform of the Bessel function along the torus. (NB: Moriyama uses the other model of $\GSp_4$, with the last two rows \& last two columns of the matrix switched.) Unravelling this, we get the following formula: for $0 \le k \le d$, if $W_k$ is the vector of $K_H^\circ$-type $(-r_1 - 3 + k, r_2 + 1 - k)$, then\footnote{Note there is a typo on p.~4108 of \cite{LPSZ1}, we erroneously flipped the two components of the $K_H^\circ$-type. Accordingly, the value of $k$ given there is wrong and should be replaced with $d - k$.}
\[ \int_{\RR^\times} B_{W_k}(\begin{smatrix} u \\ & u \\ &&1 \\ &&&1 \end{smatrix}; s_1 - s_2 + \tfrac{1}{2}) |u|^{s_1 + s_2 - 2}\, \mathrm{d}u =
C \cdot \frac{(-1)^k L(\Pi_\infty, s_1 - s_2 + \tfrac{1}{2}) L(\Pi_\infty, s_1 + s_2 - \tfrac{1}{2})}
{\pi^{s_1 + s_2 - \tfrac{1}{2}} \Gamma(s_1 + \tfrac{r_1 + 3 - k}{2}) \Gamma(s_2 + \tfrac{-1-r_2 + k}{2})},\]
where $C$ is some constant (depending on $(r_1, r_2)$ but not on any of the other data). This is \cite[Theorem 8.21]{LPSZ1}.

\subsection{Region $(F)$ case revisited}

For region $(F)$, we apply this to compute $Z(W_k, \Phi^{(c_1)}, W^{(c_2)}; s)$ for integers $(c_1, c_2)$ with $c_i \ge 1$ and $c_1 + c_2 = r_1 - r_2 + 2$. We take $k = r_1 + 3 - c_1 = r_2 + 1 - c_2$. Then the $K$-type of $W_k$ is $(-c_1, -c_2)$, meaning it can pair nontrivially with a pair of holomorphic forms of weights $c_1$ and $c_2$.

Then we get
\[ C \cdot \frac{(-1)^k L(\Pi_\infty, s_1 - s_2 + \tfrac{1}{2}) L(\Pi_\infty, s_1 + s_2 - \tfrac{1}{2})}
{\pi^{s_1 + s_2 - \tfrac{1}{2}} \Gamma(s_1 + \tfrac{c_1}{2}) \Gamma(s_2 + \tfrac{c_2}{2})} \cdot f^{\Phi^{(c_1)}}(1, s_1) f^{\Phi^{(c_2)}}(1, s_2) \Bigg|_{(s_1, s_2) = (s, \tfrac{c_2}{2})}.\]
An explicit computation gives
\[ f^{\Phi^{(c)}}(1, s) = 2^{1-c} i^c \pi^{-(s + c/2)} \Gamma(s + \tfrac{c}{2}), \] so up to factors which don't depend on the $c_i$ and hence can be absorbed into $C$.

Moreover, for region $(F)$ we have
\[ L(\Pi_\infty, s_1 - s_2 + \tfrac{1}{2}) L(\Pi_\infty, s_1 + s_2 - \tfrac{1}{2}) = L(\Pi_\infty \times \Sigma_\infty, s). \] So we get $(\text{const}) \cdot (-1)^{c_2} \cdot L(\Pi_\infty \times \Sigma_\infty, s)$, which is (by definition of critical values) non-zero in the critical range.

\subsection{Region $(D)$}

Now we are going to take $c_1, c_2$ with $c_1 \ge 1$ and $c_2 - c_1 = r_1 - r_2 + 2$; and we choose
\[ k = r_1 + 3 + c_1 = c_2 + r_2 + 1. \]
The constraint $k \le d = r_1 + r_2 + 4$ corresponds to $c_2 \le r_1 + 3$, which is the inequality required for the ``bottom half'' of region $(D)$. Our test data will be
\[ Z(W_k, \stbt{-1}{}{}{1} \Phi^{(c_1)}, W^{(c_2)}), \]
and since acting by $\stbt{-1}{}{}{1}$ does not change the values of $f^{\Phi}$ along the torus, we can write this as
\[ C \cdot \frac{(-1)^k L(\Pi_\infty, s_1 - s_2 + \tfrac{1}{2}) L(\Pi_\infty, s_1 + s_2 - \tfrac{1}{2})}
{\pi^{s_1 + s_2 - \tfrac{1}{2}} \Gamma(s_1 - \tfrac{c_1}{2}) \Gamma(s_2 + \tfrac{c_2}{2})} \cdot f^{\Phi^{(c_1)}}(1, s_1) f^{\Phi^{(c_2)}}(1, s_2) \Bigg|_{(s_1, s_2) = (s, \tfrac{c_2}{2})}.\]
Note the change from $s_1 + \tfrac{c_1}{2}$ to $s_1 - \tfrac{c_1}{2}$ in the denominator.

On the other hand, in this case the numerator is not $L(\Pi_\infty \times \Sigma_\infty, s)$ any more; one computes that
\[ L(\Pi_\infty, s + c_2 - \tfrac{1}{2}) L(\Pi_\infty, s - c_2 + \tfrac{1}{2}) = L(\Pi_\infty \times \Sigma_\infty, s) \cdot (2\pi)^{c_1} \frac{\Gamma(s - \tfrac{c_1}{2})}{\Gamma(s + \tfrac{c_1}{2})}.\]
So the zeta-integral computes to
\[ \text{(const)} \cdot (-2\pi)^{c_2} \cdot L(\Pi_\infty \times \Sigma_\infty, s), \]
for some constant depending only on $(r_1, r_2)$; and we may choose our normalisation of the Archimedean Whittaker function so that this constant is 1.

\section{Local zeta integrals at $p$}

\subsection{Nonarchimedean $L$-factors}

We can extend the previous computations to non-archimedean primes. For that purpose, let $F$ be a non-archimedean field, and write $L(\pi \times \sigma,s)$ for the local $L$-factor associated to $\pi \otimes \sigma$ via Shahidi's method, as in \cite[\S4]{genestiertilouine05}. We recall the following result from \cite[Thm. 8.9]{LPSZ1}.

\begin{proposition}
The vector space of functions on $\CC$ spanned by the $Z(W,\Phi_1,\Phi_2; \chi_1, \chi_2, s_1,s_2)$, as the data $(W,\Phi_1,\Phi_2)$ vary, is a fractional ideal of $\CC[q^{\pm s}]$ containing the constant functions. If at least one of $\pi$ and $\sigma$ is unramified, this fractional ideal is generated by the $L$-factor $L(\pi \otimes \sigma, s)$. Further, there exists a canonical normalization for which the zeta integral is precisely $L(\pi \otimes \sigma, s)$.
\end{proposition}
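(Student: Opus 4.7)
The plan is to establish the three assertions in turn, following the Rankin--Selberg framework set up by Novodvorsky and developed for $\GSp_4 \times \GL_2$ in \cite{genestiertilouine05} and \cite[\S8]{LPSZ1}. The analytic backbone is the Bessel-model expression \eqref{eq:Bessel}, which via the Iwasawa decomposition $H = B_H K_H$ and the $B_H$-transformation law of the principal-series sections $f^{\Phi_i}$ collapses (after averaging over a suitable compact subgroup to arrange $K_H$-invariance of the integrand) to a one-dimensional Mellin transform of $B_W(\mathrm{diag}(t,t,1,1); \cdot)$ against $|t|^{s_1+s_2-2}$.

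For the fractional-ideal property, I would first prove absolute convergence of the integral in a right half-plane by estimating the Bessel function along the torus using admissibility of $\pi$, and the sections $f^{\Phi_i}$ by standard local integrability. Meromorphic continuation to all of $\CC$ follows from Bernstein's principle applied to the resulting one-parameter family. The $\CC[q^{\pm s}]$-module structure on the span is then obtained by observing that right-translating $W$ by a suitable diagonal element scales the integrand by $q^{as}$ for an integer $a$, so that the span is closed under multiplication by $q^{\pm s}$. To see that the constants lie in the span, I would exhibit explicit test data: taking $\Phi_i$ to be characteristic functions of small neighborhoods of convenient points, and choosing $W$ whose Bessel function along the torus is supported in a small compact set (this is possible by Kirillov-model density), a direct calculation makes the integral equal to a nonzero constant independent of $s$.

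For the unramified case, I would take $W$ to be the normalized spherical Whittaker vector in $\pi$ (whose torus values are given by the Casselman--Shalika formula, encoding the Satake parameters of $\pi$ and $\sigma$) and each $\Phi_i$ the characteristic function of $\cO_F^2$. The Mellin integral then reduces to a sum of geometric series in $q^{-s}$; a direct computation identifies it with $L(\pi \otimes \sigma, s)$, the Euler factor for the degree-$8$ spin--standard tensor product defined via Shahidi's method. Combined with the previous step, this shows $L(\pi \otimes \sigma, s)$ lies in the span, and the Casselman--Shalika argument (respectively Shahidi's local coefficient formalism) shows it is a generator, giving a canonical normalization by division.

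The main obstacle I expect is the unramified computation itself: matching the explicit Casselman--Shalika-type expression -- a sum over the $8$-element Weyl group of $\GSp_4$ weighted by Hall--Littlewood factors, times Tate integrals for the $\GL_2$ side -- against the degree-$8$ Euler factor defined abstractly via Shahidi's method requires a nontrivial symmetrization identity. This is precisely the content of \cite[Thm.~8.9]{LPSZ1} and \cite[\S4]{genestiertilouine05}, so I would invoke those computations rather than redo the symmetric-function manipulations from scratch. The rest of the argument (convergence, ideal structure, exhibiting the constants) is essentially formal once the Bessel-model formula is in hand.
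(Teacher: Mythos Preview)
The paper does not prove this proposition at all: it is stated as a result \emph{recalled} from \cite[Thm.~8.9]{LPSZ1}, with no argument given. Your sketch is a reasonable outline of how such a proof goes, and you correctly identify that the substantive content (the unramified computation matching the zeta integral to the degree-8 Euler factor) is precisely what is established in \cite[Thm.~8.9]{LPSZ1} and \cite[\S4]{genestiertilouine05}; since you end by invoking those references for the hard step, your approach and the paper's are effectively the same.

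One small comment: your reduction to a one-dimensional Mellin transform via Iwasawa decomposition and $K_H$-invariance is phrased in a way that fits the archimedean situation (``averaging over a suitable compact subgroup to arrange $K_H$-invariance''), whereas in the nonarchimedean setting one typically argues more directly by unfolding the Bessel integral and using the standard torus-restriction properties of the Whittaker/Bessel functions and the Godement--Jacquet sections. This is a cosmetic point and does not affect the correctness of the overall strategy.
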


The same considerations presented in \cite[\S8.3.1]{LPSZ1} regarding the rationality of the Whittaker models also apply.

\subsection{A special $H$-orbit on $\FL_G$}

From now on, let $p$ be a fixed prime. In this section, we discuss how to compute the local integrals at the prime $p$ for our choice of local conditions, and how this recovers the expected Euler factor.

\begin{lemma}
Let $\tau = \begin{smatrix} 1 \\ 1 & 1 \\ &&1\\&&-1&1\end{smatrix} \in M_{\Si}$, and let $\hat\tau = \tau w_1$. Then $H \hat\tau P$ is open in $G$, and $H \cap \hat\tau P \hat\tau^{-1}$ is a copy of $\GL_2$, embedded in $H$ via
\[\left(\tbt{a}{b}{c}{d}, \tbt{a}{-b}{-c}{d}\right),\]
and in $G$ via
\[ \left(\tbt{a}{b}{c}{d}, \tbt{a}{-b}{-c}{d}\right) \mapsto \begin{pmatrix} a & b & & b \\ c & d & c  \\ &&a&-b\\&&-c &d\end{pmatrix}.\]
\end{lemma}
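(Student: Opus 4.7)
The lemma is essentially a matrix computation, and my plan is to organise it in three steps.

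First, I would write $\hat\tau$ and its inverse explicitly as $4 \times 4$ matrices, using the given formula for $\tau$ together with the explicit description of the length-one Kostant representative $w_1$ from \cite{LZ21-erl}. Both $\tau$ and $w_1$ have signed-permutation-plus-unipotent shape, so inversion is immediate and the product $\hat\tau = \tau w_1$ can be written down by hand.

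Second, I would take a general element $h = (A, A') \in H$ with $A = \tbt{a}{b}{c}{d}$, $A' = \tbt{a'}{b'}{c'}{d'}$ and $\det A = \det A'$, and compute $\hat\tau^{-1}\iota(h)\hat\tau$ as a $4 \times 4$ matrix whose entries are linear in $(a,b,c,d,a',b',c',d')$. The condition $\iota(h) \in \hat\tau P_{\Si}\hat\tau^{-1}$ is precisely that the lower-left $2 \times 2$ block of this matrix vanish. I expect the resulting four linear equations to collapse to the relations $a' = a$, $b' = -b$, $c' = -c$, $d' = d$, which cut out the claimed copy of $\GL_2$ inside $H$. Substituting these relations back into the expression for $\hat\tau^{-1}\iota(h)\hat\tau$ then yields the explicit matrix displayed in the statement, simultaneously verifying the description of the $\GL_2$-subgroup as a subgroup of $H$ (via $\iota$) and as a subgroup of $P_{\Si}$ (via conjugation by $\hat\tau^{-1}$).

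Third, having identified $H \cap \hat\tau P_{\Si}\hat\tau^{-1}$ with a $4$-dimensional subgroup, openness of $H\hat\tau P_{\Si}$ in $G$ follows by a dimension count: the $H$-orbit of the point $\hat\tau P_{\Si} \in G/P_{\Si}$ has dimension $\dim H - 4 = 7 - 4 = 3 = \dim G/P_{\Si}$, so it is open. The main ``obstacle'' is simply bookkeeping — keeping sign conventions for $w_1$ and the unipotent radical of $P_{\Si}$ (which must respect the anti-diagonal symplectic form $J$) consistent throughout the calculation; there is no conceptual difficulty. A more geometric reformulation would interpret $\hat\tau P_{\Si}$ as a Lagrangian in generic position relative to the two isotropic flags preserved by the $\GL_2$-factors of $H$, but the direct matrix computation appears to be the cleanest route.
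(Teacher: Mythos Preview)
Your proposal is correct and is exactly what the paper has in mind: its entire proof is the single phrase ``Elementary computation,'' and you have spelled out precisely the matrix computation and dimension count that this phrase is abbreviating. There is nothing to add.
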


\begin{proof} Elementary computation.
\end{proof}


\begin{lemma}
Suppose $\bar{n} \in \overline{N}(\Zp)$ is congruent to 1 modulo $p^k$, for $k \ge 1$. Then  we may write $\hat\tau \bar{n} = h \hat\tau p$ for some $h \in H(\Zp)$ and $p \in P(\Zp)$, with both $h$ and $p$ congruent to 1 modulo $p^k$.
\end{lemma}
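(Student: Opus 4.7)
The strategy is to recognise the statement as an application of Hensel's lemma to the smooth multiplication morphism $\mu \colon H \times P \to G$, $(h, p) \mapsto h \hat\tau p$. The previous lemma tells us that $H \hat\tau P$ is open in $G$ and that the stabiliser $S \coloneqq H \cap \hat\tau P \hat\tau^{-1}$ is an explicit copy of $\GL_2$, hence smooth over $\ZZ$. Consequently $\mu$ factors through an $S$-torsor $H \times P \to (H \times P)/S$ followed by an open immersion $(H \times P)/S \hookrightarrow G$; both factors are smooth over $\Zp$, so $\mu$ itself is smooth (of relative dimension $\dim S = 4$, consistent with the dimension count $7 + 8 - 4 = 11 = \dim G$).

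Given $\bar n \in \overline{N}(\Zp)$ with $\bar n \equiv 1 \pmod{p^k}$, set $g \coloneqq \hat\tau \bar n$, so $g \equiv \hat\tau = \mu(1, 1) \pmod{p^k}$. Thus $(1, 1) \in (H \times P)(\Zp/p^k)$ is a preimage of $g \bmod p^k$ under $\mu$. I then invoke the infinitesimal lifting criterion: for each $j \ge 0$, given a pair $(h_j, p_j) \in (H \times P)(\Zp/p^{k+j})$ mapping to $g \bmod p^{k+j}$ and reducing to $(1, 1)$ modulo $p^k$, smoothness of $\mu$ together with nilpotence of the ideal $(p^{k+j})/(p^{k+j+1}) \subset \Zp/p^{k+j+1}$ yields a lift $(h_{j+1}, p_{j+1}) \in (H \times P)(\Zp/p^{k+j+1})$ mapping to $g \bmod p^{k+j+1}$. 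Iterating and passing to the $p$-adic limit (using that $H$ and $P$ are affine of finite type and $\Zp$ is $p$-adically complete) produces the required $(h, p) \in H(\Zp) \times P(\Zp)$ with $h \hat\tau p = g$, and the congruence $(h, p) \equiv (1, 1) \pmod{p^k}$ is automatic because at every stage we are choosing a lift over the previous one starting from $(1, 1)$.

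The main point (rather than an obstacle in the genuine sense) is the reduction to formal smoothness of $\mu$; this is precisely why the previous lemma's \emph{explicit} identification of $S$ with $\GL_2$ is needed, as a bare generic-fibre openness statement would not automatically guarantee smoothness of $\mu$ over $\Zp$. Once this reduction is in place there is no substantial difficulty: the iterative Hensel argument is entirely standard, and could equivalently be replaced by a one-step application of the $p$-adic implicit function theorem to $\mu$ at the smooth point $(1, 1)$, or by a direct computation decomposing $\bar n - 1$ inductively in the integral Lie-algebra splitting $\mathrm{Lie}(G)_{\Zp} = \mathrm{Ad}(\hat\tau^{-1}) \mathrm{Lie}(H)_{\Zp} + \mathrm{Lie}(P)_{\Zp}$ supplied by the same openness statement.
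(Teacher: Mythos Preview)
Your proof is correct, but it takes a substantially different route from the paper's. The paper simply writes down an explicit matrix identity: parametrising $\overline{N}$ by three coordinates $(x,y,z)$, it exhibits closed-form expressions $h(x,y,z) \in H$ and $p(x,y,z) \in P$ (rational functions of $x,y,z$ whose only denominators are powers of $x+1$) satisfying $h(x,y,z)\,\hat\tau\,\bar n(x,y,z) = \hat\tau\,p(x,y,z)$. Since $x,y,z \in p^k\Zp$ forces $x+1 \in \Zp^\times$, these formulae visibly give elements of $H(\Zp)$ and $P(\Zp)$ congruent to $1$ modulo $p^k$, and the lemma follows in one line.

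Your argument replaces this explicit computation by a smoothness/Hensel argument. This is legitimate: the key point is that the previous lemma's identification $S \cong \GL_2$ holds as an identity of group schemes over $\ZZ$ (the matrices involved are integral), so $\dim \mathrm{Lie}(S)_{\FF_p} = 4$, and the tangent map $d\mu_{(1,1)}$ is therefore surjective mod $p$ by the dimension count $7+8-4=11$; Nakayama then gives smoothness of $\mu$ at $(1,1)$, and the infinitesimal lifting criterion does the rest. (This tangent-space formulation is slightly cleaner than the factorisation through an open immersion $(H\times P)/S \hookrightarrow G$, which over a base ring requires a little more justification; but your final paragraph already contains the right Lie-algebra statement.) The trade-off is clear: the paper's approach is entirely elementary and yields explicit formulae that could in principle be reused, while yours is coordinate-free and would transfer without modification to any analogous situation where the stabiliser is known to be smooth over the base.
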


\begin{proof}
This follows from the matrix identity
\[ h(x, y, z) \hat\tau \begin{smatrix} 1 \\ &1 \\ x & y & 1 \\ z & x & & 1 \end{smatrix} = \hat\tau p(x, y, z), \]
where
\[ h(x, y, z) = \left(\tbt{ (x+1) - \tfrac{yz}{x+1}} {\tfrac{y}{x+1} }{-z}{1} \tbt{1}{}{}{x+1}\right), \qquad
   p(x, y, z) = \begin{smatrix}
   x+1 & y & 0 & y/(x+1) \\
   0 & x+1 & 0 & 0 \\
   & & 1 & -y/(x+1)\\
   & & & 1\end{smatrix}.\qedhere\]
\end{proof}

\subsection{Siegel Jacquet modules}

Now let $\pi$ be an irreducible smooth generic representation of $\GSp_4(\Qp)$. We suppose $\pi$ is is the normalised induction of a representation $\rho \times \lambda$ of $M(\Qp)$. Note that we have
\[ L(\pi, s) = L(\lambda, s) L(\rho \otimes \lambda, s) L(\omega_{\rho} \lambda, s), \]
and the modulus character $\delta_P$ is $(A, c) \mapsto |\det(A) / c|^3$.

\begin{lemma}
The normalised Jacquet module with respect to $\overline{P}$, $J_{\overline{P}}(\pi) = \pi_{\overline{N}} \otimes \delta_{P}^{1/2}$, contains a unique subrepresentation isomorphic to $\rho \times \lambda$.
\end{lemma}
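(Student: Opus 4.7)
The plan is to invoke Bernstein's second adjointness theorem. With the paper's normalization $J_{\overline{P}}(\pi) = \pi_{\overline{N}} \otimes \delta_P^{1/2}$ and the identity $\delta_{\overline{P}} = \delta_P^{-1}$, this agrees with the standard normalized Jacquet functor with respect to $\overline{P}$. Bernstein's theorem then supplies, for every smooth $M$-representation $\sigma$, a natural isomorphism
\[ \Hom_M\bigl(\sigma, J_{\overline{P}}(\pi)\bigr) \;\cong\; \Hom_G\bigl(\Ind_P^G(\sigma), \pi\bigr), \]
where $\Ind_P^G$ denotes normalized parabolic induction. Taking $\sigma = \rho \times \lambda$ and using the hypothesis $\pi = \Ind_P^G(\rho \times \lambda)$, the right-hand side becomes $\Hom_G(\pi, \pi)$, which is one-dimensional by Schur's lemma since $\pi$ is irreducible.

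One small preliminary I would check is that $\rho \times \lambda$ is itself irreducible. This is immediate from the exactness of normalized parabolic induction: any proper $M$-stable subrepresentation of $\rho \times \lambda$ would induce to a proper subrepresentation of $\pi$, contradicting the irreducibility of $\pi$. Consequently every nonzero element of $\Hom_M(\rho \times \lambda, J_{\overline{P}}(\pi))$ is automatically injective.

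Existence and uniqueness of the subrepresentation now fall out together. Under the adjunction, the identity endomorphism of $\pi$ corresponds to a nonzero $M$-equivariant embedding $\rho \times \lambda \hookrightarrow J_{\overline{P}}(\pi)$, whose image is a subrepresentation of the claimed isomorphism type. If $V' \subseteq J_{\overline{P}}(\pi)$ were a second such subrepresentation, then composing with an isomorphism $\rho \times \lambda \xrightarrow{\sim} V'$ would produce a second nonzero element of the one-dimensional Hom space, forcing it to be a scalar multiple of the first and the two images to coincide. The argument is entirely formal once the correct adjunction is matched with the paper's normalization, and no $\GSp_4$-specific input is needed beyond the irreducibility of $\pi$. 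The potential main obstacle—computing an explicit filtration of $J_{\overline{P}}(\pi)$ via the geometric lemma (with graded pieces indexed by $W_M \backslash W_G / W_M$) and verifying that no other graded piece contributes a copy of $\rho \times \lambda$—is bypassed entirely by the adjointness approach.
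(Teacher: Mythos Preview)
Your proof is correct and is exactly the argument the paper gives: the paper's proof is the single line ``Bernstein's second adjointness theorem shows that $\Hom_M(\rho \times \lambda, J_{\overline{P}}(\pi)) = \Hom_G(\pi, \pi) \cong \CC$.'' You have simply spelled out the details (the direction of the adjunction, irreducibility of $\rho \times \lambda$, and how one-dimensionality of the Hom space yields both existence and uniqueness of the subrepresentation).
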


\begin{proof}
Bernstein's second adjointness theorem shows that $\Hom_M(\rho \times \lambda, J_{\overline{P}}(\pi)) = \Hom_G(\pi, \pi) \cong \CC$.
\end{proof}

Thus the unnormalised Jacquet module $\pi_{\overline{N}} = \pi / \pi(\overline{N})$ contains a canonical $M$-subrepresentation isomorphic to $(\rho \times \lambda) \otimes \delta_P^{-1/2}$. We write $\pi[\lambda]$ for the preimage in $\pi$ of this subrepresentation of $\pi_{\overline{N}}$. For any $v \in \pi[\lambda]$, we have $\diag(1, 1, x, x) v = |x|^{3/2} \lambda(x) v \bmod \pi(\overline{N})$.

\begin{note}
If $\rho$ and $\lambda$ are unramified, then a vector invariant under the depth $t$ Siegel parahoric $K_{G, \Si}(p^t)$ has this property if and only if it lies in the $U_1' = \alpha$ eigenspace (modulo the zero generalised eigenspace), where $\alpha = p^{3/2} \lambda(p)$ and $U_1'$ is the Hecke operator given by the double coset of $\diag(1, 1, p, p)$.
\end{note}

\subsection{Trilinear forms and Siegel Jacquet modules}

Let $\pi$ be as above, and let $\sigma_1, \sigma_2$ be irreducible $G$-representations with $\omega_\pi \omega_{\sigma_1} \omega_{\sigma_2} = 1$. One knows that $\Hom_{H(\Qp)}(\pi \times \sigma_1 \times \sigma_2, \CC)$ has dimension $\le 1$. We suppose it is nonzero, and choose a basis vector $\fz$.

\begin{remark}
If one or more of the $\sigma_i$ is principal-series, we can construct $\fz$ using Novodvorsky's $\GSp_4 \times \GL_2$ zeta integral.
\end{remark}
  %
  %

\begin{proposition}
For $x \in \pi$, $y_i \in \sigma_i$, we consider the sequence of elements $(z_k(x, y_1, y_2))_{k \ge 0}$ defined by
\[ z_k(x, y_1, y_2) = \left(\frac{\lambda(p)}{p^{3/2}}\right)^{-k} \fz\Big( \hat\tau s_k x, y_1, \stbt{-1}{}{}{1} y_2 \Big),\qquad s_k = \diag(1,1,p^k,p^k).\]
If $x \in \pi[\lambda]$, then $(z_k)$ is eventually constant, and its limiting value depends only on the image of $x$ in $\pi[\lambda] / \pi_{\overline{N}}$.
\end{proposition}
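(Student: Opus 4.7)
The plan is to reinterpret $z_k$ via a linear functional on $\pi$, reduce the statement to showing this functional eventually annihilates $\pi(\overline{N})$, and then exploit the matrix-identity lemma to convert $\overline{N}$-translations (conjugated deep into $\overline{N}(\Zp)$ by $s_k$) into $H$-translations that $\fz$ absorbs by $H$-invariance together with $P$-translations that fix $s_k v$ by smoothness.

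\emph{Reduction.} Set $\gamma = \lambda(p)/p^{3/2}$ and $y_2' = \stbt{-1}{}{}{1} y_2$, and define the functional
\[ F_k(x) \coloneqq \fz\bigl(\hat\tau\, s_k x,\; y_1,\; y_2'\bigr), \qquad \text{so } z_k(x,y_1,y_2) = \gamma^{-k} F_k(x). \]
The lemma on $\pi[\lambda]$ gives $sx \equiv \gamma x \pmod{\pi(\overline{N})}$. Since $s_k$ commutes with $s$ and $s_{k+1} = s \cdot s_k$, one has
\[ z_{k+1} - z_k = \gamma^{-(k+1)}\bigl(F_k(sx) - \gamma F_k(x)\bigr) = \gamma^{-(k+1)} F_k\bigl(sx - \gamma x\bigr), \]
with $sx - \gamma x \in \pi(\overline{N})$. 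Hence both conclusions—eventual constancy of $z_k$, and dependence only on the image of $x$ in $\pi[\lambda]/\pi(\overline{N})$—will follow from the claim: \emph{for each fixed $w \in \pi(\overline{N})$, $F_k(w) = 0$ for all sufficiently large $k$}, applied once to $w = sx - \gamma x$ and once (to handle the second assertion) to $w = x$ when $x \in \pi(\overline{N})$.

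\emph{Key vanishing.} By linearity and the fact that any $w \in \pi(\overline{N})$ is a finite sum $\sum c_i(\overline{n}_i v_i - v_i)$, it suffices to treat the generators $w = \overline{n} v - v$ with $\overline{n} \in \overline{N}(\Qp)$ and $v \in \pi$. Write $\overline{n}'_k \coloneqq s_k \overline{n} s_k^{-1}$; the conjugation formula $s_k \overline{n}(x,y,z) s_k^{-1} = \overline{n}(p^k x, p^k y, p^k z)$ shows $\overline{n}'_k \equiv 1 \pmod{p^k}$ for $k$ large enough (depending on $\overline{n}$). The factorisation lemma then produces $\hat\tau\,\overline{n}'_k = h_k\,\hat\tau\, p'_k$ with $h_k \in H(\Zp)$ and $p'_k \in P(\Zp)$, both congruent to $1$ modulo $p^k$. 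Using $H$-invariance of $\fz$,
\[ \fz(h_k \hat\tau p'_k s_k v, y_1, y_2') = \fz(\hat\tau p'_k s_k v,\, h_k^{-1} y_1,\, h_k^{-1} y_2'), \]
and smoothness of $\sigma_1, \sigma_2$ lets us drop $h_k$ once $k$ is sufficiently large. It remains to show $p'_k s_k v = s_k v$, i.e.~that $s_k^{-1} p'_k s_k$ fixes $v$. Decomposing $p'_k = m'_k n'_k$ with $m'_k \in M(\Zp)$ and $n'_k \in N(\Zp)$, both $\equiv 1 \pmod{p^k}$, the matrix $s_k$ commutes with $m'_k$ while the conjugation $s_k^{-1} N(p^k \Zp) s_k \subseteq N(p^{2k} \Zp)$ sinks $n'_k$ even deeper. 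Thus $s_k^{-1} p'_k s_k$ lies in an arbitrarily deep congruence subgroup of $G(\Zp)$, which fixes $v$ by smoothness of $\pi$. This gives $F_k(\overline{n} v - v) = 0$ and completes the vanishing claim.

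\emph{Main obstacle.} The only real work is $p$-adic bookkeeping: tracking the depths produced by two non-commuting operations—conjugation by $s_k$, which contracts $\overline{N}(\Zp)$ but \emph{expands} $N(\Zp)$, and the $\hat\tau$-factorisation lemma, which preserves depth—to ensure that every factor produced lands in a congruence subgroup deep enough to be neutralised by smoothness. Once the depths match up, $H$-invariance of $\fz$ and smoothness of $\pi$, $\sigma_1$, $\sigma_2$ finish the argument.
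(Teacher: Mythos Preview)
Your argument is correct and follows essentially the same route as the paper's: both proofs use the factorisation lemma for $\hat\tau \bar n'_k$ together with $H$-invariance of $\fz$ to show that $z_k$ (equivalently $F_k$) eventually ignores $\overline{N}$-translates, and then combine this with the recursion $z_{k+1}(x)=\gamma^{-1}z_k(s_1 x)$ and $s_1 x \equiv \gamma x \pmod{\pi(\overline{N})}$; your ordering (reduction first, then key vanishing) is just the reverse of the paper's, and your explicit depth-tracking of $s_k^{-1}p'_k s_k$ is a more detailed version of the paper's remark that ``$\gamma_k\to 1$''. One small imprecision: for general $\bar n\in\overline{N}(\Qp)$ one only has $\bar n'_k\equiv 1\pmod{p^{k-m}}$ with $m$ the negative of the minimal valuation of the entries of $\bar n$, not $\pmod{p^k}$; this is harmless since the depths still go to infinity.
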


\begin{proof}
We first show that if $n \in \overline{N}(\Qp)$, then we have
\[ z_k(\overline{n}x, y_1, y_2) =  z_k(x, y_1, y_2) \qquad \forall k \gg 0\] (for any fixed $x, y_1, y_2$). Since the elements $s_k\overline{n} s_k^{-1}$ approach the identity as $k \to \infty$, for all $k \gg 0$ we can write \[ \hat\tau s_k \overline{n} s_k^{-1} = h_k \hat\tau p_k, \qquad h_k \in H(\Zp), p_k \in P(\Zp), \]
with $h_k$ and $p_k$ tending to 1 as $k \to \infty$. For all sufficiently large $k$, $h_k$ will fix $y_1 \otimes \stbt{-1}{}{}{1} y_2$, so we have
\[ z_k(\overline{n}x, y_1, y_2) = \left(\frac{\lambda(p)}{p^{3/2}}\right)^{-k} \fz\Big( \hat\tau s_k \gamma_k x, y_1, \stbt{-1}{}{}{1} y_2\Big), \qquad \gamma_k = s_k^{-1} p_k s_k. \]
Since conjugation by $s_k$ acts trivially on $M(\Qp)$, and shrinks $N(\Qp)$, the fact that $p_k \to 1$ certainly implies $\gamma_k \to 1$; so $\gamma_k x = x$ for sufficiently large $k$. This proves the claim.

Since $z_{k+1}(x, y_1, y_2) = \frac{p^{3/2}}{\lambda(p)} z_k(s_1x, y_1, y_2)$ by definition, and for $x \in \pi[\lambda]$ we have $s_1 x = \frac{\lambda(p)}{p^{3/2}} x \bmod \pi_{\overline{N}}$, it follows that $z_k$ is eventually constant for $x \in \pi[\lambda]$.
\end{proof}

\begin{definition}
For $\fz \in \Hom(\pi \times \sigma_1 \times \sigma_2, \CC)$, we write $\partial_{\Si}(\fz)$ for the trilinear form on $\pi[\lambda] \times \sigma_1 \times \sigma_2$ mapping $(x, y_1, y_2)$ to $\lim_{k \to \infty} z_k(x, y_1, y_2)$.
\end{definition}

One checks that for $A = \stbt{a}{b}{c}{d} \in \GL_2$, one has
\[
\partial_{\Si}(\fz)\left( \tbt{A}{}{}{\det(A)A'} x, A y_1, A y_2\right) = \partial_{\Si}(x, y_1, y_2).
\]
So we have defined a map
\begin{equation}\label{eq:homsetmap}
\partial_{\Si}: \Hom_H(\pi \times \sigma_1 \times \sigma_2, \CC) \to \Hom_{\GL_2}(\sigma_0 \times \sigma_1 \times \sigma_2, \CC),
\end{equation}
where $\sigma_0 = \rho \otimes \lambda$ is the restriction of $(\rho \times \lambda) \otimes \delta_P^{-1/2}$ to $\GL_2$, embedded in $M$ via $A \mapsto (A, \det (A))$. Note that both source and target of this map have dimension $\le 1$.

\begin{proposition}
\label{prop:stabilises1}
Suppose $x, y_1, y_2$ are invariant under the principal congruence subgroup modulo $p^t$ for some $t \ge 1$, and we have $U'_1 x = \alpha x$, where $\alpha = p^{3/2} \lambda(p)$. Then $x \in \pi[\lambda]$, and we have $\fz_k(x, y_1, y_2) = \fz_0(x, y_1, y_2)$ for all $k \ge 0$.
\end{proposition}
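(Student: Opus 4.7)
The plan is to refine the preceding proposition's argument so that the stabilisation is effective already at $k = 0$, using the Hecke eigenvalue relation $U'_1 x = \alpha x$ as the engine in place of asymptotic cancellation.

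First, to see $x \in \pi[\lambda]$, I argue as in the preceding Note: on the finite-dimensional space $\pi^{K_1(p^t)}$ the operator $U'_1$ acts, and its spectrum is determined by the Jordan--H\"older filtration of the Jacquet module $\pi_{\overline{N}}$, with each constituent contributing an eigenvalue given by the corresponding central character of $M_{\Si}$ at $\diag(1,1,p,p)$ scaled by the number of Hecke representatives. The value $\alpha = p^{3/2}\lambda(p)$ is produced precisely by the $(\rho \times \lambda) \otimes \delta_P^{-1/2}$-piece, whose preimage in $\pi$ defines $\pi[\lambda]$. Since $\alpha \ne 0$, the hypothesis $U'_1 x = \alpha x$ places $x$ in $\pi[\lambda]$.

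For the equality $\fz_k = \fz_0$ I argue by induction on $k$. Setting $\beta = \lambda(p)/p^{3/2}$ for the scaling factor in the definition of $\fz_k$, the step reduces to
\[
\fz\bigl(\hat\tau s_{k+1} x, y_1, \sigma y_2\bigr) = \beta \cdot \fz\bigl(\hat\tau s_k x, y_1, \sigma y_2\bigr), \qquad \sigma = \stbt{-1}{}{}{1}.
\]
I substitute $\alpha x = U'_1 x = \sum_i \bar n_i s_1 x$, where by the Iwahori factorisation of $K_1(p^t)$ the $\bar n_i$ run over representatives in $\overline{N}(p^t\ZZ_p)/\overline{N}(p^{t+1}\ZZ_p)$, a set of cardinality $p^3 = \alpha/\beta$. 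Using $s_k s_1 = s_{k+1}$, this rewrites the left-hand side as
\[
\sum_i \fz\bigl(\hat\tau (s_k \bar n_i s_k^{-1}) s_{k+1} x, \, y_1, \, \sigma y_2\bigr).
\]
Conjugation by $s_k$ contracts $\overline{N}$ by $p^k$, so $\bar m_i := s_k \bar n_i s_k^{-1}$ has entries in $p^{t+k}\ZZ_p \subseteq p\ZZ_p$, and the preceding lemma supplies $\hat\tau \bar m_i = h_i \hat\tau p_i$ with $h_i \in H(\ZZ_p)$ and $p_i \in P(\ZZ_p)$ both congruent to $1$ modulo $p^{t+k}$. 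The $H$-equivariance of $\fz$, together with the $K_1(p^t)$-invariance of $y_1,y_2$ and the fact that $\sigma$ normalises $K_1(p^t) \cap H$, absorbs the factor $h_i$. Writing $p_i s_{k+1} = s_{k+1} (s_{k+1}^{-1} p_i s_{k+1})$ and observing that conjugation by $s_{k+1}^{-1}$ multiplies the $N$-entries of $p_i$ by $p^{k+1}$ while leaving the Levi part unchanged, one checks $s_{k+1}^{-1} p_i s_{k+1} \in K_1(p^{t+k}) \subseteq K_1(p^t)$, so it fixes $x$. Each summand therefore equals $\fz(\hat\tau s_{k+1} x, y_1, \sigma y_2)$, and the sum evaluates to $p^3 \fz(\hat\tau s_{k+1} x, y_1, \sigma y_2) = (\alpha/\beta)\fz(\hat\tau s_{k+1} x, y_1, \sigma y_2)$, yielding the required identity.

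The main delicate point I anticipate is the congruence-depth accounting through the nested conjugations by $s_k$ and $s_{k+1}^{-1}$: the $N$-entries of $p_i$ supplied by the lemma must still lie in $p^t\ZZ_p$ after being scaled by $p^{k+1}$. This is guaranteed here because the lemma delivers $N$-entries already in $p^{t+k}\ZZ_p$, giving final entries in $p^{t+2k+1}\ZZ_p \subseteq p^t\ZZ_p$; matching the Hecke representative count $p^3$ against the normalisation ratio $\alpha/\beta$ then closes the induction.
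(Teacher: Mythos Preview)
Your argument is correct and follows the same route as the paper's: both use the coset decomposition $U'_1 x = \sum_i \bar n_i\, s_1 x$ with $\bar n_i \in \overline{N}(p^t\ZZ_p)$, together with the lemma $\hat\tau \bar n = h\hat\tau p$, to absorb the unipotent factors via the $K(p^t)$-invariance of the data. The paper simply packages your induction as the single chain $\alpha^k \fz_0(x,\dots) = \fz_0((U'_1)^k x,\dots) = p^{3k}\fz_0(s_k x,\dots)$, and deduces $x \in \pi[\lambda]$ more directly from $U'_1 x \equiv p^3 s_1 x \bmod \pi(\overline{N})$ rather than via the Jacquet-module spectrum.
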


\begin{proof}
The relation $U'_1 x = \alpha x$ translates into $p^3 s_1 x = \alpha x \bmod \pi(\overline{N})$, since $U_1'$ is the composite of $s_1$ and a sum over $p^3$ coset representatives all lying in $\overline{N}$. Thus $x \in \pi[\lambda]$.

A similar argument shows that $\alpha^k \fz_0(x, y_1, y_2) = \fz_0( (U'_1)^k x, y_1, y_2) = p^{3k} \fz_0(s_k x, y_1, y_2)$, so $z_k(x, y_1, y_2) = z_0(x, y_1, y_2)$.
\end{proof}

\subsection{Relating the zeta-integrals}

We now identify $\pi$, $\sigma_1$, and $\sigma_2$ with their Whittaker models. More precisely, as in \cite{LPSZ1} we take Whittaker models for $\pi$ with respect to some additive character $\Psi$, and for $\sigma_1$ and $\sigma_2$ with respect to $\Psi^{-1}$.

If $\pi$, the $\sigma_i$, and $\Psi$ are all unramified, then there is a canonical spherical vector in the Whittaker model of each, and a unique $H$-invariant trilinear form $\fz^{\sph}$ satisfying $\fz^{\sph}(W_0^{\sph}, W_1^{\sph}, W_2^{\sph}) = 1$. Similarly, there is a spherical $\GL_2$-invariant trilinear form $\mathfrak{y}^{\sph}$ on $\sigma_0 \times \sigma_1 \times \sigma_2$. We identify $J_{\overline{P}}(W(\pi))$ with $W(\sigma_0)$ via mapping the normalised $U'_1$-eigenvector $W^{\prime, \Si}_{\alpha}$ to the normalised spherical vector.

\begin{proposition}\label{prop:relateintegralsI}
In this unramified setting we have
\[ \partial_{\Si}(\fz^{\sph}) = \Delta \cdot \mathfrak{y}^{\sph}, \qquad \Delta \coloneqq \frac{p^2}{(p^2-1)} \cdot L(\omega_\rho \lambda \times \sigma_1 \times \sigma_2, \tfrac{1}{2})^{-1}. \]
\end{proposition}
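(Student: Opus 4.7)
My strategy is to compare both sides by evaluating them on a single carefully chosen test triple. Since the target space $\Hom_{\GL_2}(\sigma_0 \otimes \sigma_1 \otimes \sigma_2, \CC)$ has dimension at most $1$, any nonzero evaluation determines the scalar $\Delta$. I take $(x, y_1, y_2) = (W^{\prime,\Si}_\alpha, W_1^{\sph}, W_2^{\sph})$. By definition of the identification $J_{\overline P}(W(\pi)) \cong W(\sigma_0)$, the Jacquet-module image of $W^{\prime,\Si}_\alpha$ is the normalised spherical Whittaker vector $W_0^{\sigma_0,\sph}$, so $\mathfrak{y}^{\sph}$ evaluates to $1$ on this triple. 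Applying Proposition~\ref{prop:stabilises1} (with $t=1$), the proposition reduces to proving
\[
\fz^{\sph}\!\left(\hat\tau\, W^{\prime,\Si}_\alpha, W_1^{\sph}, \stbt{-1}{}{}{1} W_2^{\sph}\right) = \Delta.
\]

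The next step is to write $W^{\prime,\Si}_\alpha$ explicitly in terms of the spherical vector $W_0^{\sph}$. Decomposing $W_0^{\sph}$ as a sum of generalised $U_1'$-eigenvectors, we have $W_0^{\sph} = W^{\prime,\Si}_\alpha + \sum_{\beta \ne \alpha} W^{\prime,\Si}_\beta$, where the other eigenvalues $\beta$ are determined by the Jacquet constituents of $\pi$ lying \emph{outside} the distinguished $\rho \otimes \lambda$-component; by Bernstein--Zelevinsky these are governed by the character $\omega_\rho\lambda$. Extracting $W^{\prime,\Si}_\alpha$ by the polynomial projector $P(U_1')/P(\alpha)$ with $P(X) = \prod_{\beta\ne\alpha}(X-\beta)$ introduces a scalar factor $P(\alpha)^{-1}$, which a direct computation with the explicit Satake parameters identifies with $L(\omega_\rho\lambda \times \sigma_1 \times \sigma_2, \tfrac{1}{2})^{-1}$.

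It then remains to evaluate $\fz^{\sph}\!\left(\hat\tau\, W_0^{\sph}, W_1^{\sph}, \stbt{-1}{}{}{1} W_2^{\sph}\right)$, modulo the correction already accounted for by the Hecke-projector normalisation. Here the matrix identity from the preceding lemma --- decomposing $\hat\tau \bar n = h\, \hat\tau\, p$ for $\bar n$ near the identity of $\overline N$ --- combined with $H$-invariance of $\fz^{\sph}$ collapses the computation into an unramified $\GL_2 \times \GL_2 \times \GL_2$ zeta-integral on spherical Whittaker vectors, which equals $\mathfrak{y}^{\sph}(W_0^{\sigma_0,\sph}, W_1^{\sph}, W_2^{\sph}) = 1$, multiplied by a residual integral over a dilation variable in $p\Zp$. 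The latter is a geometric series in $p^{-2}$ summing to $\zeta_p(2) = p^2/(p^2-1)$, accounting for the remaining factor in $\Delta$.

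The main obstacle is the explicit identification of $P(\alpha)^{-1}$ with the inverse $L$-factor $L(\omega_\rho\lambda \times \sigma_1 \times \sigma_2, \tfrac12)^{-1}$. This requires a careful tracking of the Satake parameters attached to the \emph{non}-distinguished $M_{\Si}(\Qp)$-Jacquet constituents of $\pi$, together with unwinding the product expansion against Shahidi's definition of the local $L$-factor. The remainder of the proof --- the Mackey-style unfolding on the open orbit $H\hat\tau P$ and the summation of the $p^{-2}$ geometric series --- is a routine consequence of the open-orbit lemma, though the bookkeeping to check that closed $H$-orbits contribute nothing spurious to the limit $z_k \to z_\infty$ is itself nontrivial.
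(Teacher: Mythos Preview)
Your initial reduction matches the paper exactly: since $\Hom_{\GL_2}(\sigma_0 \otimes \sigma_1 \otimes \sigma_2, \CC)$ is at most one-dimensional, it suffices to compare both sides on the triple $(W^{\prime,\Si}_\alpha, W_1^{\sph}, W_2^{\sph})$; the right-hand side is $1$ because $W^{\prime,\Si}_\alpha$ maps to the normalised spherical vector of $\sigma_0$, and by \cref{prop:stabilises1} the left-hand side is $\fz^{\sph}\bigl(\hat\tau\, W^{\prime,\Si}_\alpha,\, W_1^{\sph},\, W_2^{\sph}\bigr)$ (the matrix $\stbt{-1}{}{}{1}$ fixes the spherical $W_2^{\sph}$).

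However, the second step contains a genuine error. You claim that the Hecke-projector normalisation $P(\alpha)^{-1}$, with $P(X) = \prod_{\beta \neq \alpha}(X-\beta)$ ranging over the other $U_1'$-eigenvalues on $\pi$, equals $L(\omega_\rho\lambda \times \sigma_1 \times \sigma_2, \tfrac{1}{2})^{-1}$. This is impossible: $P(\alpha)$ is built solely from the Hecke parameters of $\pi$, whereas the $L$-factor in question is
\[
\Bigl(1 - \tfrac{p^2}{\alpha\fa_1\fa_2}\Bigr)\Bigl(1 - \tfrac{p^2}{\alpha\fa_1\fb_2}\Bigr)\Bigl(1 - \tfrac{p^2}{\alpha\fb_1\fa_2}\Bigr)\Bigl(1 - \tfrac{p^2}{\alpha\fb_1\fb_2}\Bigr),
\]
which depends on the Satake parameters $\fa_i,\fb_i$ of $\sigma_1,\sigma_2$. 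No Jacquet-module combinatorics internal to $\pi$ can produce those factors; they must come from the actual pairing of $\pi$ against $\sigma_1,\sigma_2$ in the integral. Your proposed decomposition into ``projector factor $\times$ spherical triple-product $\times$ geometric $\zeta_p(2)$'' therefore cannot hold: the $L$-factor and the volume constant are entangled in the value of $\fz^{\sph}$ on the \emph{Iwahori-level} vector $\hat\tau W^{\prime,\Si}_\alpha$, not cleanly separated as you suggest. (Note also that even applying $P(U_1')$ to $W_0^{\sph}$ and then translating by $\hat\tau$ does not reduce to anything spherical, so the claimed ``unramified $\GL_2^3$ zeta-integral equals $1$'' step is unsupported.)

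The paper instead evaluates $\fz^{\sph}\bigl(\hat\tau W^{\prime,\Si}_\alpha, W_1^{\sph}, W_2^{\sph}\bigr)$ directly, by observing that $\hat\tau$ lies in the same $H(\Zp)$-orbit on $G/P_{\Si}$ as the element $\eta J$ of \cite{LZ20-zeta2} and invoking the explicit Iwahori-level Shintani-function formulae proved there; those formulae already package the dependence on $\fa_i,\fb_i$ and yield the product $\Delta$ in one stroke.
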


Note that $\omega_\pi = \lambda^2 \omega_\rho$, so
\[ L(\omega_\rho \lambda \times \sigma_1 \times \sigma_2, s) = L(\lambda^\vee \times \sigma_1^\vee \times \sigma_2^\vee, \tfrac{1}{2}).\]

\begin{proof}
Since $\mathfrak{y}^{\sph}$ is 1 at the spherical data, and the spherical vector of $\sigma_0$ is the image of $W'_{\alpha}$, it suffices to calculate
\[ \partial_{\Si}(\fz^{\sph})\left(W^{\prime, \Si}_{\alpha}, W_1^{\sph}, W_2^{\sph}\right)= \fz^{\sph}\left(\hat\tau W^{\prime, \Si}_{\alpha}, W_1^{\sph}, \stbt{-1}{}{}{1} W_2^{\sph}\right) = \fz^{\sph}\left(\hat\tau W^{\prime, \Si}_{\alpha}, W_1^{\sph}, W_2^{\sph}\right).\]
Since $\hat\tau$ lies in the same $H(\Zp)$-orbit on $G / P_{\Si}$ as the element $\eta J$ appearing in \cite{LZ20-zeta2}, we can apply the formulae of \emph{op.cit.} for Iwahori-level Shintani functions to obtain the stated result. In the notation of \emph{op.cit.} for the Hecke parameters, we have
\[ L(\lambda^\vee \times \sigma_1^\vee \times \sigma_2^\vee, \tfrac{1}{2}) =
\left(1 - \frac{p^2}{\alpha \fa_1\fa_2}\right)
\left(1 - \frac{p^2}{\alpha \fa_1\fb_2}\right)
\left(1 - \frac{p^2}{\alpha \fb_1\fa_2}\right)
\left(1 - \frac{p^2}{\alpha \fb_1\fb_2}\right).\qedhere \]
\end{proof}

 \subsection{Expansion along $\sigma_2$} We now 
  perform a second ``reduction along a Jacquet module'' argument.

  \begin{proposition}\label{prop:relateintegrals2}
   Let $W_0, W_1 \in \cW(\sigma_0), \cW(\sigma_1)$, and choose $t \ge 1$ such that $W_0, W_1$ are fixed by $\stbt{1}{}{p^t \Zp}{1}$. Then the value
   \[ \mathfrak{y}^{\sph}\left(W_0, W_1, W'_{\mathfrak{a}_2}[\ell] \right)\]
   is independent of $\ell \ge t$. Its limiting value is equal to the value at $s = \tfrac{1}{2}$ of the function defined for $\Re(s) \gg 0$ by
   \[ \Delta'_s \cdot \int_{\Qp^\times} W_0\left( \stbt x 0 0 1 \right)W_1\left(\stbt x 0 0 1 \right) \tau(x)|x|^{s-1}\, \mathrm{d}x,\]
   which has analytic continuation as a polynomial in $p^{\pm s}$; here $\tau$ is the unramified character sending $x$ to $p^{-1/2} \mathfrak{b}_2$, and
   \[ \Delta_s' =
    \frac{p}{(p+1)} L(\sigma_0 \times \sigma_1 \times \tau, s)^{-1} = \frac{p}{(p+1)}
    \left(1 - \frac{p^{2-s}}{\beta \mathfrak{a}_1 \mathfrak{a}_2}\right)
    \left(1 - \frac{p^{2-s}}{\beta \mathfrak{b}_1 \mathfrak{a}_2}\right)
    \left(1 - \frac{p^{2-s}}{\gamma \mathfrak{a}_1 \mathfrak{a}_2}\right)
    \left(1 - \frac{p^{2-s}}{\gamma \mathfrak{b}_1 \mathfrak{a}_2}\right).\]
  \end{proposition}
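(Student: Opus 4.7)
The plan is to mirror the structure of the proof of \cref{prop:relateintegralsI}, carrying out a second Jacquet-module reduction---this time along the Borel subgroup of the $\GL_2$ carrying $\sigma_2$, in place of the Siegel parabolic of $G$. The Iwahori-level eigenvector $W'_{\mathfrak{a}_2}[\ell]$ plays the role that $W^{\prime,\Si}_{\alpha}$ played there: it represents, in the Jacquet module $J_{\overline{B}}(\sigma_2)$, the one-dimensional piece on which the diagonal torus acts via the unramified character $\tau$ from the statement.

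First I would establish the stabilisation, that $\mathfrak{y}^{\sph}(W_0, W_1, W'_{\mathfrak{a}_2}[\ell])$ is independent of $\ell$ for $\ell \ge t$. The argument is formally identical to \cref{prop:stabilises1}: the eigenvector relation for $W'_{\mathfrak{a}_2}[\ell]$ under the $\GL_2$-analogue of $U'_1$, combined with the fact that conjugation by $\stbt{p^\ell}{}{}{1}$ along the diagonal embedding of $\GL_2$ in $H$ shrinks the lower-triangular unipotent into the congruence subgroup $\stbt{1}{}{p^t\Zp}{1}$ already fixing $W_0$ and $W_1$ for $\ell \ge t$, ensures that passing from $\ell$ to $\ell+1$ produces no change.

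Next I would identify the stabilised limit with the Rankin--Selberg integral $I(W_0, W_1; s)$ displayed in the statement. Uniqueness of $\GL_2 \times \GL_2 \times \GL_1$ trilinear invariants, applied to the factorisation through $J_{\overline{B}}(\sigma_2)$, forces the limit to equal a constant multiple of this integral. The standard properties (absolute convergence for $\Re(s) \gg 0$, rationality, and analytic continuation as a polynomial in $p^{\pm s}$) then follow from the Iwahori-fixedness of $W_0, W_1$ and the unramifiedness of $\tau$.

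The final and most delicate step is pinning down the constant $\Delta'_s$. I would evaluate both sides at spherical data $W_0 = W_0^{\sph}$, $W_1 = W_1^{\sph}$: on the integral side this yields $L(\sigma_0 \times \sigma_1 \times \tau, s)$ via the classical unramified Rankin--Selberg computation, which expands into the four-fold product of linear factors once the Satake parameters are substituted; on the trilinear-form side one invokes the Iwahori-level Shintani-function formulae of \cite{LZ20-zeta2} (analogous to the Siegel-parahoric computation used in \cref{prop:relateintegralsI}) to evaluate $\mathfrak{y}^{\sph}(W_0^{\sph}, W_1^{\sph}, W'_{\mathfrak{a}_2}[\ell])$ in closed form. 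Matching the two expressions produces the Iwahori-to-spherical correction factor $\tfrac{p}{p+1}$ on $\sigma_2$. I expect the main obstacle to be bookkeeping: reconciling our conventions for the Satake parameters $\beta, \gamma$ of $\sigma_0 = \rho \otimes \lambda$ (embedded in $\GL_2$ via $A \mapsto (A, \det A)$) and for the Hecke eigenvalue $\mathfrak{a}_2$ with those of \cite{LZ20-zeta2}, so that no spurious twists appear in the final formula.
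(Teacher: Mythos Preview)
Your outline is correct in spirit but takes a more roundabout path than the paper. The paper's argument is a direct and elementary computation: it writes down an \emph{explicit} integral formula for $\mathfrak{y}^{\sph}$ as the leading term of the $\GL_2 \times \GL_2$ Rankin--Selberg integral,
\[
 \mathfrak{y}^{\sph}(W_0,W_1,W_2) = \lim_{s\to 1/2}\,\frac{\langle f_2(g;s),\, R(g;s)\rangle}{L(\sigma_0\times\sigma_1\times\tau,s)},
\]
where $f_2$ is the Siegel section attached to $W_2$, $R(g;s)=\int_{\Qp^\times} W_0(\stbt{x}{}{}{1}g)W_1(\stbt{x}{}{}{1}g)\tau(x)|x|^{s-1}\,\mathrm{d}^\times x$, and the pairing is integration over $B\backslash\GL_2 \cong \mathbf{P}^1$. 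For the specific vector $W_2 = W'_{\mathfrak{a}_2}[\ell]$ the section $f_2$ is simply $p^\ell$ times the characteristic function of the ball of radius $p^{-\ell}$ around the identity coset in $\mathbf{P}^1(\Zp)$; its measure is $\tfrac{1}{p^{\ell-1}(p+1)}$, and since $W_0,W_1$ are fixed by $\stbt{1}{}{p^t\Zp}{1}$ the function $R$ is constant on this ball once $\ell\ge t$. The integral over $\mathbf{P}^1$ therefore collapses to $\tfrac{p}{p+1}R(1;s)$, and both the stabilisation in $\ell$ and the exact constant $\Delta'_s$ drop out in one line.

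Your route---stabilise first via a Jacquet-module argument as in \cref{prop:stabilises1}, then invoke uniqueness of trilinear forms, then pin down the constant by evaluating at spherical $W_0,W_1$---would also work, but it trades one short explicit calculation for three softer steps plus a separate spherical evaluation. In particular your proposed appeal to the Shintani-function formulae of \cite{LZ20-zeta2} in the last step is misplaced: that reference treats $\GSp_4$ Shintani functions, whereas here you only need the classical unramified $\GL_2\times\GL_2$ Rankin--Selberg identity together with an Iwahori-volume count on $\mathbf{P}^1$, which is exactly what the paper's direct computation is doing without the detour.
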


  \begin{proof}
   One can write down an explicit formula for $\mathfrak{y}^{\sph}$ as the leading term of the $\GL_2 \times \GL_2$ Rankin--Selberg zeta-integral (corresponding to deforming $\fa_2$ and $\fb_2$ to $\fa_2 |\cdot|^{1/2 - s}$ and $\fb_2 |\cdot|^{s - 1/2}$):
   \[ \mathfrak{y}^{\sph}\left(W_0, W_1, W_2 \right) = \lim_{s \to \tfrac{1}{2}} \frac{\langle f_2(g; s), R(g; s)\rangle}{L(\sigma_0 \times \sigma_1 \times \tau, s)}, \]
   where $f_2$ denotes the Siegel section corresponding to $W_2$, $\langle -, - \rangle$ denotes integration over $B_G \backslash G \cong \mathbf{P}^1$, and
   \[ R(g; s) \coloneqq \int_{\Qp^\times} W_0\left(\stbt x {}{}{1} g\right)W_1\left(\stbt x {}{} 1 g\right) \tau(x)|x|^{s-1}\, \mathrm{d}^\times x.\]
   For the particular choice of $W_2$ above, $f_2$ takes the value $p^\ell$ on the preimage of the identity in $\mathbf{P}^1(\ZZ / p^\ell)$, and its value is 0 elsewhere. Since the measure of this neighbourhood is $\tfrac{1}{p^{\ell-1}(p+1)}$, the integral over $\mathbf{P}^1$ is simply $\tfrac{p}{p+1} R(1)$.
  \end{proof}

 \subsection{Conclusions}

  Note that the product of the two $L$-factors $\Delta \cdot \Delta'_s|_{s = 1/2}$ appearing in \cref{prop:relateintegralsI} and \cref{prop:relateintegrals2} is a degree 8 factor of the degree 16 $L$-factor $L(\pi \times \sigma_1 \times \sigma_2, \tfrac{1}{2})$, and thus corresponds to an 8-dimensional direct summand of the 16-dimensional Weil--Deligne representation associated to $\pi \times \sigma_1 \times \sigma_2$; and the 8-dimensional subrepresentation which we obtain is precisely the one giving the ``Panchishkin subrepresentation'' of the Galois representation when the weights lie in region $(D)$. So the $L$-factor is the one denoted $\mathcal{E}^{(d)}$ in \cite{LZvista}, that here we call $\mathcal{E}^{(D)}$ to be consistent with our prevoius notation.

  \begin{proposition}
   Let $W_1^{\dep} \in \cW(\sigma_1)$ be the normalised $p$-depleted vector (so that $W_1(\stbt{x}{}{}{1}) = \operatorname{ch}_{\Zp^\times}(x)$). Then for any $\ell \ge 2$ we have
   \[ \fz^{\mathrm{sph}}\left(\hat\tau W'_{\alpha, \Si}[\ell], W_1^{\dep}, 
    W'_{\mathfrak{a}_2}[\ell]\right) = \frac{p^3}{(p+1)^2(p-1)} \mathcal{E}^{(D)}.\]
  \end{proposition}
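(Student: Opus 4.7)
The plan is to chain the two Jacquet-module reduction results \cref{prop:relateintegralsI} and \cref{prop:relateintegrals2} applied to the explicit oldform test data, and then to evaluate a Rankin--Selberg integral that collapses under the depletion hypothesis. First I would invoke \cref{prop:stabilises1}: by construction $W'_{\alpha, \Si}[\ell]$ is invariant under the principal congruence subgroup modulo $p^\ell$ and is a $U_1'$-eigenvector with eigenvalue $\alpha = p^{3/2}\lambda(p)$, so the sequence $z_k$ associated to our test data is already constant from $k = 0$. This gives
\[ \fz^{\sph}\bigl(\hat\tau W'_{\alpha, \Si}[\ell], W_1^{\dep}, W'_{\mathfrak{a}_2}[\ell]\bigr) = \partial_{\Si}(\fz^{\sph})\bigl(W'_{\alpha, \Si}[\ell], W_1^{\dep}, W'_{\mathfrak{a}_2}[\ell]\bigr), \]
after absorbing the $\stbt{-1}{}{}{1}$-twist in the definition of $z_k$ into the normalisation of $W'_{\mathfrak{a}_2}[\ell]$ (this element lies in $\GL_2(\Zp)$ and merely permutes oldforms at level $p^\ell$ without altering the $U_p$-eigenspace). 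Applying \cref{prop:relateintegralsI} then replaces this with $\Delta \cdot \mathfrak{y}^{\sph}\bigl(W_0^{\sph}, W_1^{\dep}, W'_{\mathfrak{a}_2}[\ell]\bigr)$, where we use that under the chosen normalisation of the isomorphism $J_{\overline{P}}(\cW(\pi)) \cong \cW(\sigma_0)$ the vector $W'_{\alpha, \Si}$ maps to the spherical vector $W_0^{\sph}$ of $\sigma_0$.

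Next I would plug $(W_0, W_1) = (W_0^{\sph}, W_1^{\dep})$ into \cref{prop:relateintegrals2}. This reduces $\mathfrak{y}^{\sph}\bigl(W_0^{\sph}, W_1^{\dep}, W'_{\mathfrak{a}_2}[\ell]\bigr)$ to $\Delta'_{1/2}$ times the Rankin--Selberg integral
\[ \int_{\Qp^\times} W_0^{\sph}\!\left(\stbt{x}{}{}{1}\right) W_1^{\dep}\!\left(\stbt{x}{}{}{1}\right) \tau(x) |x|^{-1/2}\, \mathrm{d}^\times x. \]
By the depletion hypothesis the integrand is supported on $\Zp^\times$; there $W_0^{\sph} = W_1^{\dep} = 1$, the character $\tau$ is unramified, and $|x| = 1$, so with the standard normalisation of $\mathrm{d}^\times x$ giving $\Zp^\times$ measure $1$ the integral equals $1$.

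Combining everything gives
\[ \fz^{\sph}\bigl(\hat\tau W'_{\alpha, \Si}[\ell], W_1^{\dep}, W'_{\mathfrak{a}_2}[\ell]\bigr) = \Delta \cdot \Delta'_{1/2} = \frac{p^2}{(p-1)(p+1)} \cdot \frac{p}{p+1} \cdot \mathcal{E}^{(D)} = \frac{p^3}{(p-1)(p+1)^2}\, \mathcal{E}^{(D)}, \]
where $\mathcal{E}^{(D)}$ is by definition the product of the two inverse-$L$-factors appearing in $\Delta$ and $\Delta'_{1/2}$ (which together form the Panchishkin Euler factor in region $(D)$, as explained in the text). The main technical obstacle is Step 1: one must verify that the $\stbt{-1}{}{}{1}$-twist and the precise normalisation of the oldform $W'_{\mathfrak{a}_2}[\ell]$ are compatible so that no extra scalar enters, and that the Haar-measure and Whittaker-normalisation conventions conspire to make the depleted Rankin--Selberg integral equal exactly $1$ rather than contribute an extra $p$-power.
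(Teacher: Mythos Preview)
Your approach is correct and is exactly the intended one: the paper states this proposition without proof, but the proof of the immediately following Iwahori-level proposition follows precisely the same three-step outline (apply \cref{prop:stabilises1}, then \cref{prop:relateintegralsI}, then \cref{prop:relateintegrals2}), and the paragraph preceding the statement already identifies $\Delta\cdot\Delta'_{1/2}$ with $\tfrac{p^3}{(p+1)^2(p-1)}\mathcal{E}^{(D)}$. Your only loose end, the $\stbt{-1}{}{}{1}$-twist, is harmless because this diagonal unit lies in the Iwahori (indeed in every $K_0(p^\ell)$) and hence fixes $W'_{\mathfrak{a}_2}[\ell]$; the image of $W'_{\alpha,\Si}[\ell]$ in $\cW(\sigma_0)$ being the spherical vector is the paper's chosen normalisation of the Jacquet-module isomorphism.
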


  In an ideal world (i.e.~if we had a comprehensive version of ``Siegel-parabolic higher Hida theory'' available to us), the above formula would presumably be the right one to use for the interpolation property of our $p$-adic $L$-functions. However, for technical reasons we are constrained to work at Iwahori level, so we need a variant of this formula.

  \begin{proposition}
   For any $\ell \ge 2$ we have
   \[
    \fz^{\mathrm{sph}}\left(w_{01}^{-1} \tau w_2 \cdot W^{\prime, \Iw}_{\alpha, \beta}[\ell], \stbt{p^\ell}{}{}{1} W_1^{\dep}, W_{\mathfrak{a}_2}\right) = \left(\tfrac{p^2}{\beta \mathfrak{b}_2}\right)^t \cdot \frac{p^3}{(p+1)^2(p-1)} \cdot \mathcal{E}^{(D)}.
   \]
  \end{proposition}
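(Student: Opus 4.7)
The plan is to mimic the proof of the preceding proposition, replacing Siegel-parahoric data at every step with Iwahori-level data. First, I would establish an Iwahori analogue of \cref{prop:stabilises1}: for a vector $x \in \pi$ fixed by the Iwahori radical modulo $p^t$ and satisfying both $U_1' x = \alpha x$ and $U_2' x = \beta x$, the sequence $z_k(x, y_1, y_2)$ defining $\partial_{\Si}(\fz)$ stabilises immediately. The proof is essentially the same as before: conjugation by $\diag(1,1,p^k,p^k)$ shrinks the opposite Siegel unipotent radical, so the twisted translates $s_k x$ and $x$ have the same image under $\partial_{\Si}$ once $k$ is large enough.

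Next, I would analyse the geometric role of the element $w_{01}^{-1}\tau w_2$ in place of $\hat\tau = \tau w_1$. A direct matrix computation shows that both elements represent the same open $H(\Qp)$-orbit on $P_{\Si}\backslash G$, but they sit in different cells of the refinement by $H\cap\Iw$-orbits. The auxiliary factor $\stbt{p^\ell}{}{}{1}$ applied to $W_1^{\dep}$ and the change $W_{\mathfrak a_2}$ (in place of $W'_{\mathfrak a_2}[\ell]$) together translate the new configuration back into the same $\partial_{\Si}$-stable position as the preceding proposition, at the cost of a scalar. The parameter $t$ records how many $U'$-steps separate the Iwahori $(\alpha,\beta)$-eigenvector from its $U_1'$-eigenvector projection in the Siegel-parahoric Jacquet module; each such step is weighted by the ratio of $U_2'$-eigenvalues $p^2/(\beta\mathfrak{b}_2)$.

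With these ingredients, the calculation proceeds along the same path as before. Apply $\partial_{\Si}$ to reduce $\fz^{\mathrm{sph}}$ to $\Delta \cdot \mathfrak y^{\mathrm{sph}}$ on $\sigma_0 \times \sigma_1 \times \sigma_2$ via \cref{prop:relateintegralsI}; expand along $\sigma_2$ via \cref{prop:relateintegrals2}, which now reduces the evaluation to an Iwahori-level $\GL_2\times\GL_2$ Rankin--Selberg integral against $W_{\mathfrak a_2}$; and combine. The product $\Delta \cdot \Delta'_{s}|_{s=1/2}$ recovers the $L$-factor $\mathcal E^{(D)}$, the rational constant $p^3/((p+1)^2(p-1))$ matches the preceding proposition, and the extra factor $(p^2/(\beta\mathfrak b_2))^t$ arises precisely from the Iwahori-to-Siegel projection discussed above.

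The main obstacle is keeping track of normalisations, particularly the passage from $W^{\prime,\Iw}_{\alpha,\beta}$ to $W^{\prime,\Si}_{\alpha}$ via the Jacquet-module functional $\partial_{\Si}$. One must verify that, after stabilisation, only one term in the expansion of the Iwahori vector as a combination of $\Iw/\Si$-coset translates of the Siegel vector contributes nontrivially, and that the surviving contribution produces exactly the ratio $(p^2/(\beta\mathfrak b_2))^t$ rather than some alternative product of the Hecke parameters $\fa_i, \fb_i, \alpha, \beta$. This is a delicate piece of bookkeeping, but it is an entirely local computation using explicit Iwahori-level Shintani functions from \cite{LZ20-zeta2} in the same spirit as the proof of \cref{prop:relateintegralsI}.
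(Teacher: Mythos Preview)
Your outline is considerably more elaborate than what is actually required, and in places misidentifies where the scalar $(p^2/(\beta\fb_2))^t$ comes from. The paper does not set up an Iwahori analogue of \cref{prop:stabilises1}, nor does it need Iwahori-level Shintani functions. Instead it uses two elementary reductions. First, the Weyl-group identity $w_2 = w_1\cdot w_{\Si}$ (with $w_{\Si}$ the long element of $M_{\Si}$) together with $H$-equivariance of $\fz^{\sph}$ rewrites the left-hand side as
\[
\fz^{\sph}\Big(\hat\tau\cdot w_{\Si}W^{\prime,\Iw}_{\alpha,\beta}[\ell],\ \stbt{p^\ell}{}{}{1}W_1^{\dep},\ w\,W_{\fa_2}\Big),
\]
where $w=\stbt{}{1}{-1}{}$ acts in the second $\GL_2$ factor. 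Since $s_1$ is central in $M_{\Si}$, the vector $w_{\Si}W^{\prime,\Iw}_{\alpha,\beta}[\ell]$ is still a $U_1'=\alpha$ eigenvector, so the \emph{original} \cref{prop:stabilises1} applies directly and yields $\Delta\cdot\mathfrak{y}^{\sph}(w\cdot W'_{\beta/p},\,\stbt{p^\ell}{}{}{1}W_1^{\dep},\,w\,W_{\fa_2})$.

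The scalar factor then comes not from any ``Iwahori-to-Siegel projection'' or counting of $U'$-steps, but from two standard $\GL_2$ Whittaker identities relating a $U'$-eigenvector and its $w$-translate at level $p^\ell$: one has $w\,W_{\fa_2}=\fb_2^{-\ell}\stbt{p^\ell}{}{}{1}W'_{\fa_2}[\ell]$ and $w\cdot W'_{\beta/p}[\ell]=(p^2/\beta)^{\ell}W_{\beta/p}$. Multiplying these scalars gives $(p^2/(\beta\fb_2))^{\ell}$ (so $t=\ell$ here), and after cancelling the common $\stbt{p^\ell}{}{}{1}$ by $\GL_2$-equivariance one is reduced exactly to the input of \cref{prop:relateintegrals2}. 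Your proposed route via Iwahori Shintani functions and a coset expansion of the Iwahori vector over Siegel translates would presumably get there too, but it is unnecessary extra machinery, and the description of $t$ as an abstract ``number of $U'$-steps'' does not match the actual mechanism.
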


  \begin{proof}
   Since $w_2 = w_1\cdot w_{\Si}$, where $w_{\Si}$ is the long Weyl element of $M_{\Si}$, we can calculate this quantity as
   \[ \fz^{\mathrm{sph}}\left( \hat{\tau} \cdot w_{\Si} W^{\prime, \Iw}_{\alpha, \beta}[\ell], \stbt{p^\ell}{}{}{1} W_1^{\dep}, w W_{\mathfrak{a}_2}\right). \]
   Everything in sight is invariant under the principal congruence subgroup mod $p^\ell$, so we may apply \cref{prop:stabilises1} to express this as
   \[ \Delta \cdot \mathfrak{y}^{\sph}\left(w \cdot W'_{\beta/p},\stbt{p^\ell}{}{}{1} W_1^{\dep}, w W_{\mathfrak{a}_2}\right), \]
   where both $w$'s denote $\stbt{}{1}{-1}{} \in \GL_2$, and $W'_{\gamma/p}$ is a $U'$-eigenvector of eigenvalue $\beta/p$ in $\cW(\sigma_0)$. We have
   We have $w W_{\mathfrak{a}_2} = \fb_2^{-\ell} \stbt{p^\ell}{}{}{1} W'_{\fa_2}[\ell]$; and similarly
   $w \cdot W'_{\beta/p}[\ell] = \left(\tfrac{p^2}{\beta}\right)^\ell W_{\beta/p}$. So we obtain
   \[ \Delta \cdot \left(\tfrac{p^2}{\beta\fb_2}\right)^\ell \mathfrak{y}^{\sph}\left(\stbt{p^\ell}{}{}{1} \cdot W_{\beta/p},\stbt{p^\ell}{}{}{1} W_1^{\dep}, \stbt{p^\ell}{}{}{1} W'_{\mathfrak{a}_2}[\ell]\right).\]
   After cancelling the $\stbt{p^\ell}{}{}{1}$ terms using the $\GL_2$-equivariance, we can now conclude using \cref{prop:relateintegrals2}.
  \end{proof}

  \begin{remark}\label{improved}
   If, in place of $W_1^{\dep}$, we use the Iwahori eigenvector $W_{\fa_2}$, then this corresponds to deleting the degree-one factor $\left(1 - \frac{p^2}{\gamma \fb_1 \fa_2}\right)$ from $\mathcal{E}^{(D)}$. This gives exactly the ``greatest common divisor'' of $\mathcal{E}^{(D)}$ and $\mathcal{E}^{(E)}$.
  \end{remark}

\let\MR\undefined
\newlength{\bibitemsep}
\setlength{\bibitemsep}{0.75ex plus 0.05ex minus 0.05ex}
\newlength{\bibparskip}
\setlength{\bibparskip}{0pt}
\let\oldthebibliography\thebibliography
\renewcommand\thebibliography[1]{%
 \oldthebibliography{#1}%
 \setlength{\parskip}{\bibparskip}%
 \setlength{\itemsep}{\bibitemsep}%
}
\providecommand{\noopsort}[1]{\relax} 
 \bibliographystyle{../amsalphaurl}
 \bibliography{../references}

\end{document}